\newcommand{\p}{\rho}
\newcommand{\abp}{\rho^{\mbox{ab}}}
\newcommand{\nats}{{\mathbb N}}
\newcommand{\ints}{{\mathbb Z}}
\newcommand{\No}{\mathbb N_0}
\newcommand{\set}[2]{ \left\{ #1 \mid #2 \right\} }
\newcommand{\kae}[1]{ \thicksim_{#1} }
\newcommand{\pref}[1]{\mathrm{pref}_{#1}}
\newcommand{\suff}[1]{\mathrm{suff}_{#1}}
\newcommand{\fc}[2]{ \mathcal{P}_{#2}^{(#1)}}
\newcommand{\swap}[1]{ g_{#1} }
\newtheorem{theorem}{Theorem}[section]
\newtheorem{lemma}[theorem]{Lemma}
\newtheorem{corollary}[theorem]{Corollary}
\newtheorem{proposition}[theorem]{Proposition}
\newtheorem{remark}[theorem]{Remark}
\newtheorem{example}[theorem]{Example}
\newtheorem{definition}[theorem]{Definition}
\begin{document}

\begin{frontmatter}

\title{
On a generalization of Abelian equivalence and complexity of infinite words}

\author[label4]{Juhani Karhumaki\corref{cor1}\fnref{label1}}
  \ead{karhumak@utu.fi}

 \author[label4]{Aleksi Saarela\fnref{label2}}
  \ead{amsaar@utu.fi}

   \author[label4,label5]{Luca Q. Zamboni\fnref{label3}}
  \ead{lupastis@gmail.com}

  \fntext[label1]{Partially supported by the Academy of Finland under grants
 251371 and 257857.}
  \fntext[label3]{Partially supported by a FiDiPro grant (137991) from the Academy of Finland and by
ANR grant {\sl SUBTILE}.}
\address[label4]{Department of Mathematics and Statistics \& FUNDIM,
University of Turku,
FI-20014 Turku, Finland}
\address[label5]{Universit\'e de Lyon,
Universit\'e Lyon 1, CNRS UMR 5208,
Institut Camille Jordan,
43 boulevard du 11 novembre 1918,
F69622 Villeurbanne Cedex, France}

\begin{abstract}
In this paper we introduce and study a family of complexity functions of infinite words  indexed by $k \in \ints ^+ \cup \{+\infty\}.$
Let $k \in \ints ^+ \cup \{+\infty\}$ and $A$ be a finite non-empty set. Two finite words $u$ and $v$ in $A^*$ are said to be $k$-Abelian equivalent if for all $x\in A^*$ of length less than or equal to $k,$ the number of occurrences of $x$ in $u$ is equal to the number of occurrences of $x$ in  $v.$ This defines a family of  equivalence relations $\thicksim_k$ on $A^*,$ bridging the gap between the usual notion of Abelian equivalence (when $k=1$) and equality (when $k=+\infty).$
We show that the number of $k$-Abelian equivalence classes of words of length $n$ grows polynomially, although the degree is exponential in $k.$ Given an infinite word $\omega \in A^\nats,$
we consider the associated complexity function $\mathcal {P}^{(k)}_\omega :\nats \rightarrow \nats$ which counts the number of  $k$-Abelian equivalence classes of factors of $\omega$ of length $n.$
We show that the complexity function $\mathcal {P}^{(k)}$
is intimately linked with periodicity. More precisely we define an auxiliary function $q^k: \nats \rightarrow \nats$ and show that  if $\mathcal {P}^{(k)}_{\omega}(n)<q^k(n)$ for some $k \in \ints ^+ \cup \{+\infty\}$ and $n\geq 0,$ the $\omega$ is ultimately periodic. Moreover  if $\omega$ is aperiodic, then $\mathcal {P}^{(k)}_{\omega}(n)=q^k(n)$ if and only if $\omega$ is Sturmian.
We also study $k$-Abelian complexity in connection with repetitions in words. Using Szemer\'edi's theorem, we show that if $\omega$ has bounded $k$-Abelian complexity, then for every $D\subset \nats$ with positive upper density and for every positive integer $N,$ there exists a $k$-Abelian $N$ power  occurring in $\omega$ at some position $j\in D.$ \end{abstract}

\begin{keyword}Abelian equivalence, complexity of words, Sturmian words, Szemer\'edi's theorem.
\MSC 68R15
\end{keyword}

\end{frontmatter}
\section{Introduction}

Abelian equivalence of words has long been a subject of great interest (see for instance Erd\"os problem, \cite{CRSZ, CovHed, CurRam2009, Dekking1979, Keranen1992ICALP, PZ, RSZ1, RSZ2, aleksi}). Given a finite non-empty set $A,$  let $A^*$ denote the set of all finite words over $A.$  Two words $u$ and $v$ in $A^*$ are {\it Abelian equivalent,} denoted $u\thicksim_{\mbox{ab}} v,$ if and only if $|u|_a=|v|_a$ for all $a\in A,$ where $|u|_a$ and $|v|_a$ denote the number of occurrences of $a$ in $u$ and $v,$ respectively. It is readily verified that $\thicksim_{\mbox{ab}}$ defines an equivalence relation (in fact a congruence) on $A^*.$

We consider the following natural generalization: Fix  $k \in \ints ^+ \cup \{+\infty\}.$ Two  words $u$ and $v$ in $A^*$ are said to be $k$-{\it Abelian equivalent}, written $u\thicksim_k v,$ if $|u|_x=|v|_x$ for each non-empty word $x$ with $|x|\leq k$   (where $|x|$ denotes the length of $x,$ and $|u|_x$ and $|v|_x$ denote the number of occurrences of $x$ in $u$ and $v,$ respectively).
We note that  $u\thicksim _{+\infty} v$ if and only if $u=v,$ while  $\thicksim_1$ corresponds to the usual notion of Abelian equivalence $\thicksim_{\mbox{ab}}.$  Thus one may regard the notion of $k$-Abelian equivalence as gradually bridging the gap between Abelian equivalence ($k=1$) and equality ($k=+\infty).$  It is readily verified that $\thicksim_k$ defines an equivalence relation (in fact a congruence) on $A^*.$  Clearly,  if $u\thicksim_k v,$ then $|u|=|v|$ and $u\thicksim_{\ell} v$ for each positive integer $\ell \leq k.$

The notion of $k$-Abelian equivalence was first introduced by the first author in \cite{Ka80}
in connection with formal languages and  decidability questions of various fundamental problems.
It was shown that the well known Parikh Theorem on the equivalence of Parikh images of regular and context-free languages does not hold for $k$-abelian equivalence.
In contrast various highly nontrivial decidability questions including the D0L sequence equivalence problem \cite{ER} or the Post Correspondence Problem \cite{Post}, turned out to be easily decidable in the context of
$k$-Abelian equivalence. Recently $k$-Abelian equivalence has been studied in the context of avoidance of repetitions in words (see  the discussion at the beginning of \S\ref{last} on $k$-Abelian powers).
In this paper we undergo an investigation of the complexity of infinite words in the framework of $k$-Abelian equivalence. As is the case with various other notions of complexity of words, we will see that $k$-Abelian complexity is intimately linked with periodicity  and can be used to detect the presence of repetitions.

Let $A$ be a finite non-empty set. For each infinite word
$\omega= a _0a_1 a_2\ldots $ with $a_i\in A,$
we denote by ${\mathcal F}_{\omega}(n)$ the set of all {\it factors} of $\omega$ of length $n,$ that is, the set of all finite words of the form $a_{i}a_{i+1}\cdots a_{i+n-1}$ with $i\geq 0.$
We set \[\p_{\omega}(n)=\mbox{Card}({\mathcal F}_{\omega}(n)).\] The function $\p_{\omega}:\nats \rightarrow \nats$ is called the {\it factor complexity function} of $\omega.$
Analogously, for each $k \in \ints ^+ \cup \{+\infty\}$ we define  \[\mathcal {P}^{(k)}_\omega (n)=\mbox{Card}\left({\mathcal F}_{\omega}(n)/\thicksim_{k}\right).\]
The function
$\mathcal {P}^{(k)}_\omega :\nats \rightarrow \nats,$ which counts the number of $k$-Abelian equivalence classes of factors of $\omega$ of length $n,$ is called the $k$-{\it Abelian complexity} of $\omega.$ In case $k=+\infty$ we have that $\mathcal {P}^{(+\infty)}_\omega (n)=\p_{\omega}(n),$ while if $k=1,$
$\mathcal {P}^{(1)}_\omega (n),$ denoted $\abp_{\omega}(n),$ corresponds to the usual Abelian complexity of $\omega.$

Most word  complexity functions, including factor complexity \cite{MorHed1940}, maximal pattern complexity \cite{KZ}, permutation complexity \cite{AFKS, FDFF}, Abelian complexity \cite{CovHed}, and Abelian maximal pattern complexity \cite{KWZ}, may be used to detect
(and in some cases characterize) ultimately  periodic words. For instance,  a celebrated result  due to Morse and Hedlund \cite{MorHed1940} states that an infinite word $\omega \in A^\nats$ is ultimately periodic if and only if $\p_{\omega}(n)\leq n$ for some $n\in \ints^+.$ The third author together with T. Kamae proved a similar result in the context of maximal pattern complexity with $n$ replaced by $2n-1$ (see \cite{KZ}).
Furthermore,  amongst all aperiodic (meaning non-ultimately periodic) words, Sturmian words generally have the lowest possible complexity\footnote{ With respect to  maximal pattern complexity, and Abelian maximal pattern complexity, Sturmian words are not the only words of lowest  complexity.}. We  show that these same results hold in the framework of  $k$-Abelian complexity.
In order to formulate the precise link between aperiodicity and $k$-Abelian complexity, we define, for each $k \in \ints ^+ \cup \{+\infty\},$ an auxiliary function $q^{(k)}:\nats \rightarrow \nats$ by

\[ q^{(k)}(n) =
\left\{\begin{array}{ll} n+1 \,\,\,&\mbox{for}\,\, n\leq 2k-1\\
2k \,\,\,&\mbox{for}\,\, n\geq 2k
\end{array}
\right.
\]
We prove that  for $\omega \in A^\nats$,  if $\mathcal{P}^{(k)}_\omega (n_0)<q^{(k)}(n_0)$ for some $k \in \ints ^+ \cup \{+\infty\}$ and $n_0 \geq 1,$ then $\omega$ is ultimately periodic.

This result is already well  known in the special cases $k=+\infty$ and $k=1$ (see \cite{MorHed1940} and \cite{CovHed} respectively).  By the Morse-Hedlund result mentioned earlier,  this condition gives a characterization of ultimately periodic words in the special case $k=+\infty.$
In contrast, $k$-Abelian complexity does not yield such a characterization. Indeed, both Sturmian words and the ultimately periodic word  $01^\infty = 0111\cdots$ have the same constant $2$ Abelian complexity. More generally, we shall see that the ultimately periodic word
$0^{2k-1}1^\infty$  has the same $k$-Abelian complexity as a Sturmian word.
Nevertheless $k$-Abelian complexity gives a complete characterization of Sturmian words amongst all aperiodic words. More precisely, we prove that for an aperiodic word $\omega \in A^\nats,$  the following conditions are equivalent:
\begin{itemize}
\item  $\omega$ is a balanced binary word, that is, {\it Sturmian}.
\item $\mathcal {P}^{(k)}_\omega (n)=q^{(k)}(n)$ for each $k \in \ints ^+ \cup \{+\infty\}$ and $n\geq 1.$
\end{itemize}
Again, the special cases of $k=+\infty$ and $k=1$ were already known (see
\cite{MorHed1940} and \cite{CovHed} respectively).

Finally we investigate the question of avoidance of $k$-Abelian $N$ powers: By a $k$-Abelian $N$ power we mean a word $U$ of the form $U=U_1U_2\ldots U_N$ such that $U_i\thicksim_k U_j$ for all $1\leq i,j\leq N.$ Using Szemer\'edi's theorem \cite{Sz}, we show that if $\omega$ has bounded $k$-Abelian complexity, then for every $D\subset \nats$ with positive upper density and for every positive integer $N,$ there exists a $k$-Abelian $N$ power  occurring in $\omega$ at some position $j\in D.$

The paper is organized as follows: In \S\ref{back} we recall some basic definitions and notation and establish various basic properties of $k$-Abelian equivalence of words. Also in \S\ref{back} we compute the rate of growth of the number of $k$-Abelian equivalence classes of words in $A^n.$
In \S\ref{periodicity} we develop the link between $k$-Abelian complexity and periodicity of words. In \S\ref{Sturmwords} we compute the $k$-Abelian complexity of Sturmian words and show that it completely characterizes Sturmian words amongst all aperiodic words. Finally in \S\ref{last} we study $k$-Abelian complexity in the context of repetitions in words.

\section{$k$-Abelian equivalence}\label{back}

\subsection{Definitions and first properties}

Given a finite non-empty set $A,$ we denote by $A^*$ the set of all finite words over $A$ including the empty word, denoted by $\varepsilon,$ by $A^+$ the set of all finite non-empty words over $A,$ by $A^\nats$ the set of (right) infinite words over $A,$ and by
$A^\ints$ the set of  bi-infinite words over  $A.$
Given a finite word $u =a_1a_2\ldots a_n$ with $n \geq 1$ and $a_i \in A,$ we denote the length $n$ of $u$ by $|u|$ (by convention we set  $|\varepsilon|=0.)$   For each $x\in A^+,$ we let $|u|_x$  denote the number of occurrences of  $x$ in $u.$ For $u\in A^*,$ we denote by $\bar u$ the reverse of $u.$

A factor $u$ of $\omega=a_0a_1a_2\ldots \in A^\nats$ is called {\it right special} (respectively {\it left special}) if there exists distinct symbols $a,b\in A$ such that
both  $ua$ and $ub$ (respectively $au$ and $bu$) are factors of $\omega.$ We say $u$ is {\it bispecial} if $u$ is both left and right special.
An infinite word $\omega\in A^\nats$  is said to be \emph{periodic}  if there exists a positive integer $p$ such that
$a_{i+p} = a_i$ for all indices $i.$ It is said to be \emph{ultimately periodic} if $a_{i+p} = a_i$ for all sufficiently large $i$.
It is said to be \emph{aperiodic} if it is not ultimately periodic.
Sturmian words are the {\it simplest} aperiodic infinite words;
Sturmian words are infinite words over a binary alphabet having exactly $n+1$ factors of length
$n$ for each $n \geq 0.$ Their origin can be traced back to the astronomer J. Bernoulli  III in 1772.  A fundamental result due to Morse and Hedlund \cite{MorHed1940} states that each aperiodic (meaning non-ultimately periodic) infinite word must contain at least $n+1$ factors of each length $n\geq 0.$  Thus Sturmian words are those aperiodic words of lowest factor complexity.  They arise naturally in many different areas of mathematics including combinatorics, algebra, number theory, ergodic theory, dynamical systems and differential equations. Sturmian words are also of great importance in theoretical physics and in theoretical computer science and are used in
computer graphics as digital approximation of straight lines.
If $\omega \in \{a,b\}^\nats$ is Sturmian, then for each positive integer $n$ there exists a unique right special (respectively left special) factor of length $n,$ and one is the reversal of the other. In particular, if $x$ is a bispecial factor, the $x$ is a {\it palindrome}, i.e., $x =\bar x.$ For more on Sturmian words, we refer the reader to \cite{Lothaire1983book}.

%

\begin{definition}\label{df}\rm {Let $k \in \ints ^+ \cup \{+\infty\}.$ We say two words $u,v\in A^+$ are $k$-{\it Abelian equivalent} and write $u\thicksim_k v,$ if $|u|_x=|v|_x$ for all words $x$ of length $|x|\leq k.$ }
\end{definition}

\noindent We note that if $u,v\in A^+$ and $|u|=|v|\leq k,$ then $u\thicksim_k v$ if and only if $u=v.$

\begin{example}The words $u=010110$ and $v=011010$ are $3$-Abelian equivalent but not $4$-Abelian equivalent since the prefix $0101$ of $u$ does not occur in $v.$ The words $u=0110$ and $v=1101$ are not $2$-Abelian equivalent (since they are not Abelian equivalent) yet for every word $x$ of length $2$ we have $|u|_x=|v|_x.$
\end{example}

The next lemma
gives different equivalent ways of defining $k$-Abelian equivalence.
For example,
item \eqref{item:lessk} corresponds to the Definition~\ref{df}
and item \eqref{item:prefsuff} corresponds to another common definition:
Words $u$ and $v$ of length at least $k - 1$ are $k$-Abelian equivalent
if they share the same prefixes and suffixes of length $k - 1$ and
if $|u|_x = |v|_x$ for every word $t$ of length $k$.

\begin{lemma} \label{prefixsuffix}
Let $u$ and $v$ be words of length at least $k - 1$ and
let $|u|_t = |v|_t$ for every word $t$ of length $k$.
The following are equivalent:
\begin{enumerate}
\item \label{item:lessk}
  $|u|_s = |v|_s$ for all $s \in A^{\leq k - 1}$,
\item \label{item:k-1}
  $|u|_s = |v|_s$ for all $s \in A^{k - 1}$,
\item \label{item:prefsuff}
  $\pref{k - 1}(u) = \pref{k - 1}(v)$ and
  $\suff{k - 1}(u) = \suff{k - 1}(v)$,
\item \label{item:pref}
  $\pref{k - 1}(u) = \pref{k - 1}(v)$,
\item \label{item:suff}
  $\suff{k - 1}(u) = \suff{k - 1}(v)$,
\item \label{item:prefsuffi}
  $\pref{i}(u) = \pref{i}(v)$ and
  $\suff{k - 1 - i}(u) = \suff{k - 1 - i}(v)$
  for some $i \in \{0, \dots, k - 1\}$.
\end{enumerate}
\end{lemma}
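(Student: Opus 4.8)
The engine of the whole argument is the pair of elementary counting identities relating the occurrences of a word to the occurrences of its one-letter extensions. For any word $w$ and any $s$ with $|s|=\ell$,
\[
|w|_s = \sum_{a\in A}|w|_{as} + [s=\pref{\ell}(w)] = \sum_{a\in A}|w|_{sa} + [s=\suff{\ell}(w)],
\]
(writing $[P]=1$ if $P$ holds and $[P]=0$ otherwise), since each occurrence of $s$ is either preceded by a letter, or followed by a letter, or sits at the very beginning, respectively end, of $w$. Setting $\delta(s)=|u|_s-|v|_s$ and invoking the hypothesis $|u|_t=|v|_t$ for all $|t|=k$, the case $\ell=k-1$ of these identities collapses, after cancelling the length-$k$ terms, to
\[
\delta(s) = [s=\pref{k-1}(u)]-[s=\pref{k-1}(v)] = [s=\suff{k-1}(u)]-[s=\suff{k-1}(v)] \quad(\star).
\]

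From $(\star)$ I would read off the core equivalences directly. The function $\delta$ vanishes on $A^{k-1}$ — which is item (2) — if and only if $\pref{k-1}(u)=\pref{k-1}(v)$ (item (4)), and if and only if $\suff{k-1}(u)=\suff{k-1}(v)$ (item (5)); indeed if the two prefixes (resp. suffixes) differed, the corresponding form of $(\star)$ would make $\delta$ equal to $\pm1$ on two distinct words. This settles (2)$\Leftrightarrow$(4)$\Leftrightarrow$(5), hence also their conjunction (3). For (1)$\Leftrightarrow$(2), the implication (1)$\Rightarrow$(2) is trivial, and for the converse I would use that (2) already forces $\suff{k-1}(u)=\suff{k-1}(v)$, whence $\suff{\ell}(u)=\suff{\ell}(v)$ for every $\ell\le k-1$; a downward induction on $\ell$ from $k-1$ to $1$, feeding the suffix identity with the vanishing of $\delta$ on $A^{\ell+1}$ and the equality of the length-$\ell$ suffixes, then gives $\delta\equiv0$ on each $A^{\ell}$ (the empty-word case being trivial by convention).

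The one genuinely new point is the equivalence with (6), and it is where I expect the subtlety to live. The implication (3)$\Rightarrow$(6) is immediate: equal prefixes (resp. suffixes) of length $k-1$ have equal prefixes (resp. suffixes) of every shorter length, for all $i$ at once. For (6)$\Rightarrow$(3) I would argue by contraposition through $(\star)$. If (4) fails, so $\pref{k-1}(u)\ne\pref{k-1}(v)$, then the prefix form of $(\star)$ shows $\delta$ is supported on exactly these two words; matching this with the suffix form forces $\pref{k-1}(u)=\suff{k-1}(u)=:\alpha$ and $\pref{k-1}(v)=\suff{k-1}(v)=:\beta$ with $\alpha\ne\beta$. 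This is precisely the ``cyclic'' configuration. But then (6) for an index $i$ would require $\pref{i}(\alpha)=\pref{i}(\beta)$ and $\suff{k-1-i}(\alpha)=\suff{k-1-i}(\beta)$; since the first $i$ and the last $k-1-i$ positions partition all $k-1$ positions of a length-$(k-1)$ word, this forces $\alpha=\beta$, a contradiction. Hence (6) implies (4).

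The obstacle to keep in view is exactly this cyclic case $\pref{k-1}=\suff{k-1}$: it witnesses that equality of all length-$k$ counts does not pin down the word, so one cannot treat (6) by naively growing the known length-$i$ prefix one letter at a time. The clean way past it is the positional partition argument above, together with the remark that $i\in\{0,k-1\}$ recovers (5) and (4) as the degenerate endpoints, which keeps the bookkeeping uniform across the range of $i$.
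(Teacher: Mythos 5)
Your proof is correct and follows essentially the same route as the paper's: your identity $\delta(s)=[s=\pref{k-1}(u)]-[s=\pref{k-1}(v)]=[s=\suff{k-1}(u)]-[s=\suff{k-1}(v)]$ is precisely the paper's multiset count of length-$(k-1)$ factors via their length-$k$ extensions, and your isolation of the cyclic configuration $\pref{k-1}(w)=\suff{k-1}(w)$, resolved under item (6) by partitioning the $k-1$ positions of $\alpha,\beta$ into the first $i$ and last $k-1-i$, is the same dichotomy the paper phrases in terms of odd versus even multiplicities. The one place you are more explicit is the downward induction giving $|u|_s=|v|_s$ for $|s|<k-1$, which the paper only sketches as ``can be proved in a similar way.''
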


\begin{proof}
\eqref{item:lessk} $\Rightarrow$ \eqref{item:k-1}:
Clear.

\eqref{item:k-1} $\Rightarrow$ \eqref{item:prefsuff}:
Let $\{t_1, \dots, t_n\}$ be the multiset
of factors of $u$ (and of $v$) of length $k$.
The multiset of factors of $u$ of length $k - 1$ is
\begin{equation*}
  \{\pref{k - 1}(u)\} \cup \{\suff{k - 1}(t_1), \dots, \suff{k - 1}(t_n)\},
\end{equation*}
and the multiset of factors of $v$ of length $k - 1$ is
\begin{equation*}
  \{\pref{k - 1}(v)\} \cup \{\suff{k - 1}(t_1), \dots, \suff{k - 1}(t_n)\}.
\end{equation*}
These multisets must be the same, so $\pref{k - 1}(u) = \pref{k - 1}(v)$.
Similarly, $\suff{k - 1}(u) = \suff{k - 1}(v)$.

\eqref{item:prefsuff} $\Rightarrow$ \eqref{item:pref}, \eqref{item:suff}:
Clear.

\eqref{item:pref} or \eqref{item:suff} $\Rightarrow$ \eqref{item:prefsuffi}:
Clear.

\eqref{item:prefsuffi} $\Rightarrow$ \eqref{item:lessk}:
Let $\{t_1, \dots, t_n\}$ be the multiset
of factors of $u$ (and of $v$) of length $k$.
Every
\begin{equation*}
  s \in A^{k - 1} \smallsetminus \{\pref{k - 1}(u), \suff{k - 1}(u)\}
\end{equation*}
appears in the multiset
\begin{equation} \label{eq:prefsuffmultiset}
  \{\pref{k - 1}(t_1), \dots, \pref{k - 1}(t_n)\} \cup
  \{\suff{k - 1}(t_1), \dots, \suff{k - 1}(t_n)\}
\end{equation}
$2 |u|_s$ times.
A word $s \in \{\pref{k - 1}(u), \suff{k - 1}(u)\}$
appears $2 |u|_s - 1$ times if $\pref{k - 1}(u) \ne \suff{k - 1}(u)$,
and $2 |u|_s - 2$ times if $\pref{k - 1}(u) = \suff{k - 1}(u)$.
Similarly, every
\begin{equation*}
  s \in A^{k - 1} \smallsetminus \{\pref{k - 1}(v), \suff{k - 1}(v)\}
\end{equation*}
appears $2 |v|_s$ times, and
a word $s \in \{\pref{k - 1}(v), \suff{k - 1}(v)\}$
appears $2 |v|_s - 1$ times if $\pref{k - 1}(v) \ne \suff{k - 1}(v)$,
and $2 |v|_s - 2$ times if $\pref{k - 1}(v) = \suff{k - 1}(v)$.

If some words appear an odd number of times in \eqref{eq:prefsuffmultiset},
then these must be $\pref{k - 1}(u)$ and $\suff{k - 1}(u)$,
and they must also be $\pref{k - 1}(v)$ and $\suff{k - 1}(v)$.
If follows that $|u|_s = |v|_s$ for every $s \in A^{k - 1}$.
(In this case the assumption \eqref{item:prefsuffi} was not needed.)

If all words appear an even number of times in \eqref{eq:prefsuffmultiset},
then necessarily
$\pref{k - 1}(u) = \suff{k - 1}(u)$ and $\pref{k - 1}(v) = \suff{k - 1}(v)$.
From \eqref{item:prefsuffi} it follows that
$\pref{k - 1}(u) = \pref{k - 1}(v)$ and $\suff{k - 1}(u) = \suff{k - 1}(v)$,
and thus $|u|_s = |v|_s$ for every $s \in A^{k - 1}$.

The fact that $|u|_s = |v|_s$ also for every $s$ of length less than $k - 1$
can be proved in a similar way.
\end{proof}

\noindent The next lemma lists some basic facts on $k$-Abelian equivalence:

\begin{lemma} \label{lem:basic}
Let $u, v \in A^*$ and $k \geq 1$.
\begin{itemize}
\item If $|u| = |v| \leq 2 k - 1$ and $u \kae{k} v$, then $u = v$.
\item If $u \kae{k} v$, then $u \kae{k'} v$ for all $k' \leq k$.
\item If $u_1 \kae{k} v_1$ and $u_2 \kae{k} v_2$,
  then $u_1 u_2 \kae{k} v_1 v_2$.
\end{itemize}
\end{lemma}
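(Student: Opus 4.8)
The three items are of increasing substance, so I will treat them in order, using Lemma \ref{prefixsuffix} as the main tool. Throughout I may assume $k$ is finite, since for $k = +\infty$ each assertion is immediate from the fact that $u \kae{\infty} v$ means $u = v$.

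\emph{First item.} Write $m = |u| = |v| \le 2k-1$. If $m \le k$ this is exactly the remark following Definition \ref{df}. If $k < m \le 2k-1$, then $m \ge k-1$, so Lemma \ref{prefixsuffix} applies and item \eqref{item:prefsuff} gives $\pref{k-1}(u) = \pref{k-1}(v)$ and $\suff{k-1}(u) = \suff{k-1}(v)$; writing $u = u_1 \cdots u_m$ and $v = v_1 \cdots v_m$, this forces $u_i = v_i$ for $i \le k-1$ and for $i \ge m - k + 2$. When $m \le 2k-2$ these two ranges already cover every index, so $u = v$. The only remaining case is $m = 2k-1$, where exactly the middle letter $u_k$ versus $v_k$ is still undetermined. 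Here I would pass to the multiset of length-$k$ factors, which coincides for $u$ and $v$ because $u \kae{k} v$: the first letters of the $k$ factors starting in positions $1, \dots, k$ form the multiset $\{u_1, \dots, u_k\}$ for $u$ and $\{v_1, \dots, v_k\}$ for $v$, these multisets are equal, and since $u_i = v_i$ for $i < k$ we may cancel to conclude $u_k = v_k$, hence $u = v$.

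\emph{Second item.} This is immediate from the definition: $u \kae{k} v$ asserts $|u|_x = |v|_x$ for every $x$ with $|x| \le k$, and for $k' \le k$ the constraints defining $\kae{k'}$ form a subset of those defining $\kae{k}$.

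\emph{Third item (congruence).} This is the heart of the lemma, and I would fix $x$ with $|x| = \ell \le k$ and prove $|u_1 u_2|_x = |v_1 v_2|_x$. The plan is to split the occurrences of $x$ in any concatenation $w_1 w_2$ into those lying entirely in $w_1$, those entirely in $w_2$, and those straddling the boundary, giving
\[
  |w_1 w_2|_x = |w_1|_x + |w_2|_x + B(w_1, w_2, x),
\]
where the boundary term counts factorizations $x = x' x''$ with $x', x'' \ne \varepsilon$ such that $x'$ is a suffix of $w_1$ and $x''$ a prefix of $w_2$; explicitly $B(w_1,w_2,x) = \sum_{\ell'=1}^{\ell-1} [\suff{\ell'}(w_1) = \pref{\ell'}(x)]\,[\pref{\ell-\ell'}(w_2) = \suff{\ell-\ell'}(x)]$. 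The first two summands agree for $(u_1,u_2)$ and $(v_1,v_2)$ directly from $u_i \kae{k} v_i$ and $|x|\le k$. For the boundary term, observe that every factorization uses $\ell' \le \ell - 1 \le k-1$, so $B$ depends only on the suffixes of $w_1$ and the prefixes of $w_2$ of length at most $k - 1$ (and on $|w_1|, |w_2|$, which govern whether those suffixes and prefixes exist). Since $u_1 \kae{k} v_1$ forces equal lengths together with $\suff{k-1}(u_1) = \suff{k-1}(v_1)$ --- via Lemma \ref{prefixsuffix} item \eqref{item:suff} when $|u_1| \ge k-1$, and via the first item when $|u_1| < k - 1$ (in which case $u_1 = v_1$ outright) --- and symmetrically $\pref{k-1}(u_2) = \pref{k-1}(v_2)$, one gets $B(u_1,u_2,x) = B(v_1,v_2,x)$, whence $u_1 u_2 \kae{k} v_1 v_2$.

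I expect the genuine obstacles to be two bookkeeping points rather than any deep idea: in the first item, the delicate case $m = 2k-1$, where neither the common prefix nor the common suffix of length $k-1$ reaches the central letter and one must extract information from the factor multiset; and in the third item, verifying that the boundary contribution is controlled entirely by bounded-length prefixes and suffixes, while correctly accommodating short factors (where a full length-$(k-1)$ prefix or suffix may not exist, but the words are then equal).
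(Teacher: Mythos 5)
Your proof is correct. The paper itself states Lemma~\ref{lem:basic} without proof, presenting all three items as routine consequences of Definition~\ref{df} and Lemma~\ref{prefixsuffix}, so there is no argument of the authors' to compare against; your treatment --- the multiset cancellation that pins down the middle letter in the delicate case $|u|=2k-1$, and the decomposition of $|w_1w_2|_x$ into interior occurrences plus a boundary term governed entirely by length-$(k-1)$ suffixes of $w_1$ and prefixes of $w_2$ --- is a complete and sound justification using exactly the tools the paper makes available.
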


The bound $2k-1$ in Lemma~\ref{lem:basic} is optimal as for each positive integer $k$ there exist words $u\neq v$ of length $2k$ such that
$u\thicksim_{k} v.$ For example, the words $u=0^{k-1}010^{k-1}$ and $v=0^{k-1}100^{k-1}$ of length $2k$ are readily verified to be $k$-Abelian equivalent (see Proposition~\ref{simplificationprop}).

\begin{lemma}\label{centralword} Fix $2\leq k<+\infty.$ Suppose $aub\thicksim_{k} cvd$ with $a,b,c,d\in A$ and $u,v\in A^*.$ Then $u\thicksim_{k-1}v.$
\end{lemma}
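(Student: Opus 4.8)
The plan is to reduce the statement to a single occurrence-counting identity, after first disposing of the short words by a size argument. First I would note that $|aub|=|cvd|$ forces $|u|=|v|$, and then split into two cases according to the length of $aub$. If $|aub|\le 2k-1$, then since $aub\thicksim_{k}cvd$ with equal lengths, the first item of Lemma~\ref{lem:basic} gives $aub=cvd$, whence $a=c$, $b=d$ and $u=v$; in particular $u\thicksim_{k-1}v$. So the whole content lies in the case $|aub|\ge 2k$, where $|u|=|v|\ge 2k-2\ge k$ (using $k\ge 2$). This is exactly what makes the counting formula below applicable for every pattern length up to $k-1$, and it also guarantees $|aub|\ge k-1$, so that Lemma~\ref{prefixsuffix} is available.

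The core step is the following bookkeeping. For a nonempty pattern $s$ with $\ell=|s|\le|u|+1$ I claim
\[
  |aub|_s \;=\; |u|_s \;+\; [\,s=\pref{\ell}(aub)\,] \;+\; [\,s=\suff{\ell}(aub)\,],
\]
where $[\,\cdot\,]$ equals $1$ if the enclosed statement holds and $0$ otherwise. The justification is to classify each occurrence of $s$ in $aub$ by its starting position: the occurrence beginning at the very first letter, if it exists, is precisely the prefix $\pref{\ell}(aub)$; the occurrence ending at the very last letter, if it exists, is precisely the suffix $\suff{\ell}(aub)$; and every remaining occurrence lies entirely inside $u$, these being in bijection with the occurrences of $s$ in $u$. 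The hypothesis $\ell\le|u|+1$ is what guarantees that the prefix occurrence and the suffix occurrence are distinct and that no single occurrence meets both the first letter $a$ and the last letter $b$, so there is no double counting. The identical formula holds verbatim for $cvd$.

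Finally I would feed the equivalence $aub\thicksim_{k}cvd$ into this identity. Since $|aub|=|cvd|\ge 2k>k-1$ and the length-$k$ factor counts of $aub$ and $cvd$ agree, Lemma~\ref{prefixsuffix} applied to $aub$ and $cvd$ (the implication \eqref{item:lessk}$\Rightarrow$\eqref{item:prefsuff}) yields $\pref{k-1}(aub)=\pref{k-1}(cvd)$ and $\suff{k-1}(aub)=\suff{k-1}(cvd)$; truncating these to length $\ell$ gives $\pref{\ell}(aub)=\pref{\ell}(cvd)$ and $\suff{\ell}(aub)=\suff{\ell}(cvd)$ for every $\ell\le k-1$. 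Hence, for each such $s$, the two bracket terms agree in the two instances of the identity, and since $|aub|_s=|cvd|_s$ (because $|s|\le k-1\le k$), they cancel to give $|u|_s=|v|_s$. As this holds for all nonempty $s$ of length at most $k-1$, we conclude $u\thicksim_{k-1}v$.

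I expect the only delicate point to be the occurrence-counting identity itself: one must argue carefully about the boundary occurrences that touch $a$ or $b$ and about the degenerate regime where $\ell$ is comparable to $|u|$. This is precisely why the preliminary reduction to the case $|aub|\ge 2k$ (forcing $|u|\ge k$, hence $\ell\le k-1<|u|+1$) is worth making first, since it removes all such degeneracies and lets the formula and the subsequent cancellation run uniformly.
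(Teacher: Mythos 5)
Your proof is correct and follows essentially the same route as the paper's: the identity $|aub|_s=|u|_s+[\,s=\pref{\ell}(aub)\,]+[\,s=\suff{\ell}(aub)\,]$ is exactly the paper's case analysis (neither prefix nor suffix / exactly one / both), and the matching of the bracket terms via Lemma~\ref{prefixsuffix} is the same step. The only difference is that you explicitly dispose of the short-word regime $|aub|\le 2k-1$ via Lemma~\ref{lem:basic}, a degeneracy the paper passes over silently; this is a reasonable bit of extra care but not a different argument.
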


\begin{proof} Let $x\in A^*$ with $|x|\leq k-1.$ We can assume that $|x|<|aub|$ for otherwise $0=|u|_x=|v|_x.$ If $x$ is neither a prefix nor a suffix of $aub,$ then by Lemma~\ref{prefixsuffix} $x$ is neither a prefix nor suffix of $cvd$ and hence $|u|_x=|aub|_x=|cvd|_x=|v|_x.$   If $x$ is either a prefix of $aub$ or a suffix of $aub$ but not both, the $|u|_x=|aub|_x-1=|cvd|_x-1=|v|_x.$ Finally if $x$ is both a prefix and a suffix of $aub$ then
$|u|_x=|aub|_x-2=|cvd|_x-2=|v|_x.$
\end{proof}

%

\subsection{A first connection to Sturmian words}

\noindent The next theorem gives a complete classification of pairs of $k$-Abelian equivalent words of length $2k$ and establishes a first link to Sturmian words:

\begin{theorem}\label{2k} Fix a positive integer $k,$ and let $u,v\in A^*$ be distinct words of length $2k.$ Then $u\thicksim_{k}v$ if and only if there exist distinct letters $a,b\in A,$ a Sturmian word $\omega \in \{a,b\}^\nats$ and a right special factor $x$ of $\omega$ of length $k-1$ (or empty in case $k=1)$ such that
\[u=xab\bar x\,\,\,\,\,\,\mbox{and} \,\,\,\,\,\, v=xba\bar x.\]
In particular $u$ and $v$ are both factors of the same Sturmian word $\omega.$\end{theorem}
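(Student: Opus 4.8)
The plan is to organise both implications around one structural observation. If $u$ and $v$ have the claimed form, then, since $a,b$ are single letters,
$\bar u=\overline{xab\bar x}=x\,ba\,\bar x=v$, so the pair $\{u,v\}$ is interchanged by reversal. Reversal preserves $k$-Abelian equivalence and satisfies $|w|_t=|\bar w|_{\bar t}$ for every word $w$; hence $\bar u=v$ turns the requirement ``$|u|_t=|v|_t$ for all $t$ with $|t|=k$'' into the single requirement that the multiset of length-$k$ factors of $u$ be closed under reversal. This symmetry, together with Lemma~\ref{prefixsuffix}, is the engine I would use throughout, and it also explains why the role of $x$ being \emph{right} special is matched by $\bar x$ being the left special factor.

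For the ``if'' direction I would first note $u\ne v$ (they differ in position $k$, where $a\ne b$) and $|u|=|v|=2k$. Since $|x|=k-1$, both words have prefix $x$ and suffix $\bar x$ of length $k-1$, so condition~\eqref{item:prefsuff} of Lemma~\ref{prefixsuffix} holds; by that lemma it remains only to check $|u|_t=|v|_t$ for all $t$ of length $k$, which by the reversal reduction amounts to showing that the $k+1$ length-$k$ windows of $u$ form a reversal-closed multiset. The Sturmian hypothesis enters here: the key point to establish is that, because $x=\pref{k-1}(\omega)$'s right special factor branches and $\bar x$ is left special, the block $u$ straddles the branching position and realises each of the $k+1$ distinct length-$k$ factors of $\omega$ exactly once. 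Granting this, the $k+1$ windows are pairwise distinct and exhaust $\mathcal F_\omega(k)$; as the Sturmian language is closed under reversal, this set is reversal-closed, and the same count shows $u$ (hence $\bar u=v$) is a factor of $\omega$.

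For the ``only if'' direction, start from $u\thicksim_k v$ with $u\ne v$ of length $2k$. Applying Lemma~\ref{prefixsuffix} to $u,v$ (their length-$k$ factor multisets agree, since $u\thicksim_k v$) gives equal prefixes and suffixes of length $k-1$, say $x$ and $y$, so $u=x\,cd\,y$ and $v=x\,c'd'\,y$ differ only in the two central letters. As $u\thicksim_k v$ forces $u\thicksim_1 v$, the words $cd$ and $c'd'$ have the same Parikh vector yet are distinct, whence $\{cd,c'd'\}=\{ab,ba\}$ for distinct $a,b$; after renaming, $u=x\,ab\,y$ and $v=x\,ba\,y$. Comparing the two extreme windows shows $xa,xb,ay,by$ are all factors of $u$, so $x$ is right special and $y$ left special in $u$. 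To upgrade this to $y=\bar x$ with $u$ a balanced binary (hence Sturmian) factor, I would induct on $k$ via Lemma~\ref{centralword}: stripping the first and last letters yields $\tilde x\,ab\,\tilde y\thicksim_{k-1}\tilde x\,ba\,\tilde y$ with $\tilde x=\suff{k-2}(x)$ and $\tilde y=\pref{k-2}(y)$, to which the inductive hypothesis applies (the base $k=1$ being $ab\thicksim_1 ba$).

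The main obstacle is the last step of the ``only if'' direction. The inductive hypothesis delivers $\pref{k-2}(y)=\overline{\suff{k-2}(x)}$, which pins down all of $y=\bar x$ except the relation between the outermost letters $u_1=x_1$ and $u_{2k}=y_{k-1}$; one must still force $x_1=y_{k-1}$ and, at the same time, rule out a third letter and establish balance, so that $x$ is genuinely the right special factor of a Sturmian word and not merely of $u$. This is precisely where the full multiset identity, rather than its boundary or Abelian consequences, is needed: I expect to close it by showing that any coincidence among the windows of $u$, or any imbalance, would break reversal-closedness of the window multiset and so contradict $\bar u\thicksim_k v$. Since this is the same balance phenomenon as the window-distinctness sub-lemma used in the ``if'' direction, I would isolate a single lemma describing exactly when the central exchange $ab\leftrightarrow ba$ preserves the length-$k$ factor multiset, and invoke it in both directions.
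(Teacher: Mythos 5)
Your skeleton is the same as the paper's -- Lemma~\ref{prefixsuffix} to force a common prefix $x$ and suffix $y$ of length $k-1$ so that $u=xaby$, $v=xbay$, then Lemma~\ref{centralword} and induction on $k$ -- but both directions stop short of the actual content. In the ``if'' direction you reduce everything to the claim that the $k+1$ length-$k$ windows of $xab\bar x$ are pairwise distinct and exhaust $\mathcal F_\omega(k)$, and then say ``granting this.'' That claim is true but is itself a statement about Rauzy graphs requiring proof, and it is not needed: since $xab\bar x$ and $xba\bar x$ are Abelian equivalent and share the prefix $x$ and suffix $\bar x$ of length $k-1$, Proposition~\ref{simplificationprop} gives $xab\bar x\thicksim_k xba\bar x$ immediately (the only point to check is that both words are factors of $\omega$, which is Remark~\ref{bispecial}). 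Your reversal reduction also quietly uses that $u$ is a factor of $\omega$, which you never establish.

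The ``only if'' direction has the more serious gap. After obtaining $u=xaby$, $v=xbay$ and applying the inductive hypothesis to the central word, everything that remains -- forcing $y=\bar x$, ruling out a third letter, and producing an infinite Sturmian word in which $x$ is right special and both $u$ and $v$ occur -- is exactly the hard part, and your proposed closing device is circular: you want to derive a contradiction from a failure of ``reversal-closedness of the window multiset of $u$,'' i.e.\ from $\bar u\not\thicksim_k v$, but $v=\bar u$ is equivalent to $y=\bar x$, which is precisely what you are trying to prove. You cannot assume the pair is swapped by reversal before that identity is established. Moreover, even once $y=\bar x$ and binarity are known, a reversal-closed multiset of windows does not by itself yield an infinite balanced word: the paper has to use reversal-invariance and the balance property of Sturmian languages to pin down the outer letters, and in the bispecial case $x=\bar x$ it must pass to a \emph{different} Sturmian word $\omega'$, obtained by altering one entry of the directive word, because $ax$ need not be right special in the $\omega$ supplied by the induction. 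Your sketch does not anticipate that the witnessing Sturmian word can change from step $k-1$ to step $k$, and the single ``central exchange preserves the factor multiset'' lemma you propose would still leave all of this to do.
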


\begin{remark}\label{bispecial}\rm{It follows that if $u$ and $v$ are distinct $k$-Abelian equivalent words of length $2k,$ then both $u$ and $v$ are on a binary alphabet and in fact factors of the same Sturmian word $\omega.$  In fact, if  $B$ is a bispecial factor of $\omega$ then both $BabB$ and $BbaB$ are factors of $\omega.$
Also, if $x$ is a right special factor of $\omega,$ then there exists a bispecial factor $B$ of $\omega$ with $x$ a suffix of $B$ and $\bar x$ a prefix of $B.$
Thus both $xab\bar x$ and $xba \bar x$ are factors of $\omega.$}
\end{remark}

\noindent We will need the next result applied to Sturmian words, but we prove it more generally for episturmian words. We refer the reader to \cite{DJP} for the definition and basic properties of episturmian words.

\begin{proposition}\label{simplificationprop} Fix a positive integer $k\geq 2.$ Let $u$ and $v$ be factors of the same episturmian word $\omega$. Then $u$ and $v$ are $k$-Abelian equivalent if and only if $u$ and $v$ are $(k-1)$-Abelian equivalent and share a common prefix and a common suffix of length $\mbox{min}\{|u|,k-1\}.$  Thus, $u$ and $v$ are $k$-Abelian equivalent if and only if $u$ and $v$ are Abelian equivalent and share a common prefix and a common suffix of length   $\mbox{min}\{|u|,k-1\}.$

\end{proposition}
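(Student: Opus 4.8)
The plan is to establish the first stated equivalence, and then obtain the ``Thus'' part by iterating it. The forward implication is immediate from the general machinery: if $u\thicksim_k v$, then $u\thicksim_{k-1}v$ by Lemma~\ref{lem:basic}, and Lemma~\ref{prefixsuffix}, condition \eqref{item:prefsuff}, gives $\pref{k-1}(u)=\pref{k-1}(v)$ and $\suff{k-1}(u)=\suff{k-1}(v)$; for $|u|\ge k-1$ this is precisely the common prefix and suffix of length $\min\{|u|,k-1\}=k-1$, while for $|u|\le k-1$ the note after Definition~\ref{df} already forces $u=v$. Hence the real content is the reverse implication. Since $u\thicksim_{k-1}v$ supplies $|u|_x=|v|_x$ for all $|x|\le k-1$, and since the case $|u|<k$ is vacuous (there are then no factors of length $k$), everything reduces to showing $|u|_t=|v|_t$ for every word $t$ of length $k$, assuming $|u|=|v|\ge k$, $u\thicksim_{k-1}v$, and $\pref{k-1}(u)=\pref{k-1}(v)$, $\suff{k-1}(u)=\suff{k-1}(v)$.

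The engine is the elementary extension identity valid for any word $w$ and any $s$ with $|s|=k-1$:
\[
  \sum_{a\in A}|w|_{sa}=|w|_s-\big[\suff{k-1}(w)=s\big],
  \qquad
  \sum_{a\in A}|w|_{as}=|w|_s-\big[\pref{k-1}(w)=s\big],
\]
where each bracket is $1$ if the displayed equality holds and $0$ otherwise (every occurrence of $s$ that is not the final, resp.\ initial, one extends uniquely by a letter). Here I would invoke the episturmian hypothesis \cite{DJP}: the set of factors of $\omega$ is closed under reversal and $\omega$ has at most one right special and at most one left special factor of each length. If $s$ of length $k-1$ is \emph{not} right special in $\omega$, then, as $u,v$ are factors of $\omega$, only one term $|{\cdot}|_{sa}$ in the first sum is nonzero, so the left-hand identity determines $|u|_{sa}$ and $|v|_{sa}$ from data of length $\le k-1$ that already agree; hence $|u|_{sa}=|v|_{sa}$. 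Dually, using the second identity, any $t=s'a'$ whose suffix $\suff{k-1}(t)$ is not left special is determined and therefore matches. This settles $|u|_t=|v|_t$ for every length-$k$ factor $t$ except those of the form $s_0 a$ where $s_0$ is the right special factor of length $k-1$ and $\suff{k-1}(s_0a)$ is the (unique) left special factor.

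For the remaining factors I would note the decisive combinatorial fact: among the words $s_0 b$ with $b\in A$, distinct $b$ give distinct length-$(k-1)$ suffixes (they differ in their last letter), so at most one value of $b$ makes $\suff{k-1}(s_0b)$ equal to the left special factor. Thus there is at most one undetermined extension of $s_0$, say $s_0a$; all the other $s_0b$ are determined by the previous paragraph. Applying the first identity at $s=s_0$,
\[
  |w|_{s_0a}=|w|_{s_0}-\big[\suff{k-1}(w)=s_0\big]-\sum_{b\neq a}|w|_{s_0b},
\]
every quantity on the right agrees between $w=u$ and $w=v$ (the count $|u|_{s_0}=|v|_{s_0}$ since $|s_0|=k-1$, the suffix bracket since $\suff{k-1}(u)=\suff{k-1}(v)$, and each $|u|_{s_0b}=|v|_{s_0b}$ for $b\ne a$ by the determined case). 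Hence $|u|_{s_0a}=|v|_{s_0a}$, completing the reverse implication and the first equivalence. The ``Thus'' clause then follows by applying the equivalence successively for $k,k-1,\dots,2$: each step trades $\thicksim_j$ for $\thicksim_{j-1}$ together with a prefix/suffix agreement of length $\min\{|u|,j-1\}$, and since agreement of prefixes and suffixes of length $\min\{|u|,k-1\}$ implies agreement of all shorter ones, the conjunction collapses to $u\thicksim_1 v$ (Abelian equivalence) together with the single condition on prefixes and suffixes of length $\min\{|u|,k-1\}$.

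I expect the main obstacle to be the very last family of factors, $s_0a$ running from the right special vertex into the left special vertex: these are exactly the extensions that the length-$(k-1)$ data and the reversal-based uniqueness do not pin down one at a time. The resolution hinges on the observation that there is \emph{at most one} such extension, so that the single linear relation above determines it by subtraction; verifying this uniqueness and checking the degenerate configurations (notably $s_0$ bispecial, e.g.\ $s_0=a^{k-1}$ with a self-loop $a^k$, and the short cases $|u|\le k-1$) is where the care is needed.
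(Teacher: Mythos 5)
Your proof is correct and is essentially the paper's own argument: both reduce to checking $|u|_t=|v|_t$ for $t$ of length $k$, dispose of the cases where $\pref{k-1}(t)$ is not right special or $\suff{k-1}(t)$ is not left special via the same extension-count identity with the $\pm 1$ boundary correction, and settle the single remaining doubly-special extension by subtracting the known counts from $|w|_{s_0}$. The paper packages the last case by noting that $t$ must then be $axa$ with $x$ a bispecial palindrome, but the counting step is identical to your "at most one undetermined extension" subtraction.
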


\begin{proof} One direction follows immediately from Lemma~\ref{prefixsuffix}. Next suppose that $u$ and $v$ are $(k-1)$-Abelian equivalent factors of the same episturmian word $\omega,$  and that $u$ and $v$ share a common prefix and a common suffix of length $\mbox{min}\{|u|, k-1\}.$ To prove that $u\thicksim _k v$ it suffices to show that whenever $axb\in {\mathcal F}_{\omega}(k)$ (with $a,b \in A$ and $x\in A^*),$ we have $|u|_{axb}=|v|_{axb}.$
First let us suppose that $ax$ is not a right special factor of $\omega$ so that every occurrence in $\omega$ of $ax$ is a occurrence of $axb.$ Then, if $ax$ is not a suffix of $u$ (and hence not a suffix of $v)$ we obtain
\[|u|_{axb} =|u|_{ax}=|v|_{ax}=|v|_{axb}.\]
On the other hand if $ax$ is a suffix of $u$ (and hence also a suffix of $v)$ we have
\[|u|_{axb}=|u|_{ax}-1=|v|_{ax}-1=|v|_{axb}.\]
Similarly, in case $xb$ is not a left special factor of $\omega$ we obtain $|u|_{axb}=|v|_{axb}.$
Thus it remains to consider the case when $ax$ is right special in $\omega$ and $xb$ is left special in $\omega.$
In this case $x$ is bispecial and $a=b.$ For each $c\in A,$ let $n_c=|u|_{axc}$ and $n'_c=|v|_{axc}.$ We must show that $n_a=n'_a.$ However we know that $n_c=n'_c$ for all $c\neq a$ since $xc$ is not left special in $\omega.$
Now, if $ax$ is not a suffix of $u$ (and hence not a suffix of $v)$ we have
\[ \sum _{c\in A} n_c = |u|_{ax} =|v|_{ax} = \sum _{c\in A} n'_c\]
whence $n_a=n'_a.$ On the other hand if $ax$ is a suffix of $u$ (and hence a suffix of $v)$ then
\[ \sum _{c\in A} n_c = |u|_{ax}-1 =|v|_{ax}-1 = \sum _{c\in A} n'_c\]
whence $n_a=n'_a$ as required.
\end{proof}

\begin{remark} \rm{The following example illustrates that the assumption in Proposition~\ref{simplificationprop} that $u$ and $v$ are factors of the same Sturmian word is necessary: Let $u=aabb$ and $v=abab.$ The $u$ and $v$ are Abelian equivalent and share a common prefix and suffix of length $1,$ yet they are not $2$-Abelian equivalent.}
\end{remark}

\begin{proof}[Proof of Theorem~\ref{2k}]
We start by showing that if $\omega \in \{a,b\}^\nats$ is a Sturmian word, and $x$ a right special factor of $\omega$ of length $k-1,$ then  $u=xab\bar x $ and $v= xba \bar x$ are $k$-Abelian equivalent. This follows from Proposition~\ref{simplificationprop} since $u$ and $v$ share a common prefix and a common suffix of lengths $k-1$ and are Abelian equivalent.

Next we suppose that $u$ and $v$ are distinct $k$-Abelian equivalent words of length $2k$ and show that both $u$ and $v$ have the required form.
We  proceed  by induction on $k.$ In case $k=1,$ we have that $u$ and $v$ are distinct Abelian equivalent words of length $2$ whence $u$ and $v$ may be written in the form $u=ab$ and $v=ba$ for some $a\neq b$ in $A.$

Next suppose the result of Theorem~\ref{2k} is true for $k-1$ and we shall prove it for $k.$ So let $u$ and $v$ be distinct $k$-Abelian equivalent words of length $2k$ with $k>1.$ Then by Lemma~\ref{prefixsuffix} we can write $u=a'u'b'$ and $v=a'v'b'$ for some
$a',b'\in A$ and $u',v'\in A^*$ where $|u'|=|v'|= 2(k-1)\geq 2.$
Since $u$ and $v$ are distinct,  it follows that $u'\neq v'.$
Also, by Lemma~\ref{centralword} it follows that $u'\thicksim _{k-1} v'.$ Thus by induction hypothesis, there exist distinct letters $a,b\in A$ and a Sturmian word $\omega \in \{a,b\}^\nats$ such that $u'$ and $v'$ are both factors of $\omega$
of the form $u'=xab\bar x$ and $v'=xba \bar x$ for some right special factor $x$ of $\omega$ of length $k-2.$

Thus we can  write $u=a'xab\bar x b'$ and $v=a'xba\bar x b'.$ Since $u\thicksim _{k} v,$  $|a'xa|=k,$ and $a\neq b$ it follows that  $a'x$ must occur in $v'$ and hence $a' \in \{a,b\}.$ Similarly we deduce that $b'\in \{a,b\}.$

Let us first suppose that $x\neq \bar x.$ Then $a'xa$ must occur in $v'$ and $a\bar xb'$ must occur in $u'.$ Hence both $a'xa$ and $a\bar x b'$ are factors of $\omega.$ Moreover, since $x\neq \bar x$ it follows that $x$ is not left special in $\omega$ and $\bar x$ is not right special in $\omega.$ Hence every occurrence of $x$ in $\omega$ is preceded by $a'$ and every occurrence of $\bar x$ is $\omega$ is followed by $b'.$
Since the factors of $\omega$ are closed under reversal, we deduce that $a'=b'$ and $a'x$ is a right special factor of $\omega.$ Moreover, since
$u'$ and $v'$ are both factors of $\omega$ beginning in $x$ and ending in $\bar x,$ it follows that $u=a'xab\bar xa'$ and $v=a'xba\bar x a'$ are both factors of $\omega.$

Finally suppose $x=\bar x$ so that $x$ is a bispecial factor of $\omega.$ We may write the increasing sequence of bispecial factors $\varepsilon =B_0,B_1,\ldots ,x= B_n,B_{n+1},\ldots $ so that $x$ is the $n$th bispecial factor of $\omega.$  We recall that associated to $\omega$ is a sequence $(a_i)_{i\geq 0} \in A^\nats$ (called the {\it directive word }of $\omega)$ defined by $a_iB_i$ is right special in $\omega.$ (See for instance \cite{RiZa}).

Without loss of generality we can suppose that $a'=a.$ We claim $b'=a.$ Suppose to the contrary that $b'=b.$ Then both $axa$ and $b\bar x b=bxb$ are factors of $v'$ contradicting that $\omega$ is balanced.
Hence we must have $a'=b'=a$ and so
$u=axab\bar x a$ and $v=axba \bar x a.$ Now $x$ is a bispecial factor of the Sturmian word $\omega.$ If $ax$ is a right special factor of $\omega$ then we are done by Remark~\ref{bispecial}. Otherwise, if $bx$ is a right special factor of $\omega,$  then this means that $a_n=b$ where $a_n$ is the $n$th entry of the directive word of $\omega.$ Let $\omega'$ be a Sturmian word whose directive word $(b_i)_{i\geq 0} $ is defined by $b_i=a_i$ for $i\neq n,$ and $b_n=a.$ Then $x$ is a bispecial factor of $\omega'$ and $ax$ is a right special factor of $\omega'.$ It follows from Remark~\ref{bispecial} that both  $u$ and $v$ are factors of $\omega'.$
\end{proof}

\noindent As an immediate consequence of Theorem~\ref{2k} we have:

\begin{corollary} Let $u\in A^*$ be of the form $u=vxab\bar x w$ where $x$ is a right special factor of length $k-1$ of a Sturmian word. Set $u'= v x ba \bar x w.$
Then $u\thicksim_{k} u'.$
\end{corollary}

\subsection{The number of $k$-Abelian classes in $A^n$}

Here we shall estimate the number of $k$-Abelian equivalence classes of
words in $A^n.$ Fix
$k \geq 1$ and let $m \geq 2$ be the cardinality of the set $A.$

\begin{lemma} \label{lem:increasing}
The number of $k$-Abelian equivalence classes of $A^{n + 1}$
is at least as large as
the number of $k$-Abelian equivalence classes of $A^{n}$.
\end{lemma}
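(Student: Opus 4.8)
The plan is to construct, for each $n$, an injection $\Phi$ from the set of $\thicksim_k$-classes of $A^n$ into the set of $\thicksim_k$-classes of $A^{n+1}$; this immediately gives $\mathrm{Card}(A^{n+1}/\!\thicksim_k)\ge\mathrm{Card}(A^n/\!\thicksim_k)$. Several cases are trivial. If $k=+\infty$ or $k=1$, then $\thicksim_k$ is equality, resp. Abelian equivalence, and appending a fixed letter $a$ already works (for $k=+\infty$ the map $w\mapsto wa$ is injective on words; for $k=1$ the Parikh vector of $wa$ determines that of $w$). If $n\le k-2$, then by the first item of Lemma~\ref{lem:basic} every word of length $n$ or $n+1$ is its own class, so $\mathrm{Card}(A^n/\!\thicksim_k)=m^n\le m^{n+1}=\mathrm{Card}(A^{n+1}/\!\thicksim_k)$. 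Thus the only genuine case is $2\le k<+\infty$ and $n\ge k-1$. Here the naive idea of appending a fixed letter fails: already for $k=2$ one has $001\not\thicksim_2 010$ but $0010\thicksim_2 0100$, so the two classes collide after appending $0$. The lesson is that classes with \emph{different} suffixes of length $k-1$ can merge, so the appended letter must be chosen to encode that suffix.

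For $n\ge k-1$ I would therefore define $\Phi([w])=[wc]$, where $c$ is the first letter of $\suff{k-1}(w)$. This is well defined: if $w\thicksim_k w'$ then $\suff{k-1}(w)=\suff{k-1}(w')$ by Lemma~\ref{prefixsuffix}, so $w$ and $w'$ prescribe the same letter $c$, and $wc\thicksim_k w'c$ because $\thicksim_k$ is a congruence (Lemma~\ref{lem:basic}). (Since $n\ge k-1$, appending a letter does not alter $\pref{k-1}$.) So $\Phi$ is a legitimate map into $A^{n+1}/\!\thicksim_k$.

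The crux is injectivity. Suppose $\Phi([w_1])=\Phi([w_2])$, i.e. $w_1c_1\thicksim_k w_2c_2$, where $c_i$ is the first letter of $z_i:=\suff{k-1}(w_i)$. By Lemma~\ref{prefixsuffix} the two length-$(n+1)$ words share their suffix of length $k-1$. Writing $z_i=c_is_i$ with $|s_i|=k-2$, one has $\suff{k-1}(w_ic_i)=\suff{k-2}(w_i)\,c_i=s_ic_i$, so comparing forces $s_1c_1=s_2c_2$, whence $s_1=s_2$, $c_1=c_2=:c$ and $z_1=z_2=:z$. Consequently $\suff{k}(w_1c)=zc=\suff{k}(w_2c)$, so for every $x$ with $|x|\le k$, the word $x$ is a suffix of $w_1c$ exactly when it is a suffix of $w_2c$. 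Combining this with the elementary peeling identity $|wc|_x=|w|_x+1$ when $x=\suff{|x|}(wc)$ and $|wc|_x=|w|_x$ otherwise, and with $|w_1c|_x=|w_2c|_x$ coming from $w_1c\thicksim_k w_2c$, I obtain $|w_1|_x=|w_2|_x$ for all $|x|\le k$; that is, $w_1\thicksim_k w_2$ and $[w_1]=[w_2]$.

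The main obstacle is exactly the choice of the appended letter. Appending something forgetful (a fixed letter, or a function of the prefix only) cannot separate classes that differ solely in their length-$(k-1)$ suffix, as the example $0010\thicksim_2 0100$ shows. Choosing $c$ to be the first letter of $\suff{k-1}(w)$ is what rigidly forces $\suff{k}(wc)$ to reproduce $\suff{k-1}(w)$, so that from the image one can recover the old suffix, align the suffix-indicators, and cancel. Once this is arranged, the remaining computation is the routine suffix-peeling step above; the only care needed is the handling of the degenerate lengths, already dispatched in the first paragraph.
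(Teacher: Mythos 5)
Your proof is correct, but it proceeds along a genuinely different route from the paper's. The paper argues by counting rather than by an explicit injection: it fixes a set $B$ of representatives of the classes of $A^n$, forms the set $AB$ of all words obtained by \emph{prepending} a letter, and shows via Lemma~\ref{prefixsuffix} and a pigeonhole argument on the $k$th letter that any $m+1$ pairwise $k$-Abelian equivalent words of $AB$ must contain a repetition; hence each class of $A^{n+1}$ meets $AB$ in at most $m$ words and there are at least $\mathrm{Card}(AB)/m=\mathrm{Card}(B)$ classes. You instead append a single, suffix-dependent letter and check directly that the induced map on classes is well defined and injective. Both arguments rest on the same two ingredients --- the equivalence of count data with shared prefixes/suffixes (Lemma~\ref{prefixsuffix}) and the peeling identity relating $|wc|_x$ (resp. $|aw|_x$) to $|w|_x$ --- but yours buys an explicit section $A^n/\!\thicksim_k\;\hookrightarrow A^{n+1}/\!\thicksim_k$, whereas the paper's bounded-fiber count avoids having to choose the appended letter cleverly (your example $0010\thicksim_2 0100$ correctly identifies why a fixed letter would fail). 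Your handling of the degenerate cases $k=1$, $k=+\infty$ and $n\le k-2$ matches the paper's quick dismissal of $k=1$ and $n<k-1$, and the injectivity computation (recovering $\suff{k-1}(w_i)$ from $\suff{k-1}(w_ic_i)$ and then cancelling the suffix indicator) is sound.
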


\begin{proof}
If $k = 1$ or $n < k - 1$, then the claim is clear.
Otherwise, let $B$ be a set of representatives
of the $k$-Abelian equivalence classes of $A^n$.
The set $A B$ has $m$ times as many words as $B$.
To prove the theorem, we will show that
there can be at most $m$ words in $A B$ that are $k$-Abelian equivalent.

Let $a \in A$ and
let $a u_0, \dots a u_m \in A B$ be $k$-Abelian equivalent.
It needs to be shown that some of these words are equal.
Two of these words must have the same $k$th letter,
let these be $au$ and $av$.
Because also $\pref{k - 1}(au) = \pref{k - 1}(av)$,
it follows that $\pref{k}(au) = \pref{k}(av)$.
If $t \in A^k$,
then either $|u|_t = |au|_t = |av|_t = |v|_t$
(if $t \ne \pref{k}(au)$),
or $|u|_t = |au|_t - 1 = |av|_t - 1 = |v|_t$
(if $t = \pref{k}(au)$).
Thus $u$ and $v$ are $k$-Abelian equivalent and,
by the definition of $B$, $u = v$.
This proves the claim.
\end{proof}

Let $s_1, s_2 \in A^{k - 1}$ and let
\begin{equation*}
  S(s_1, s_2, n) = A^n \cap s_1 A^* \cap A^* s_2
\end{equation*}
be the set of words of length $n$ that start with $s_1$ and end with $s_2$.
For every word $w \in S(s_1, s_2, n)$ we can define a function
\begin{equation*}
  f_w: A^k \to \{0, \dots, n - k + 1\}, \ f_w(t) = |w|_t.
\end{equation*}
If $u, v \in S(s_1, s_2, n)$, then $u \kae{k} v$ if and only if $f_u = f_v$.
To count the number of $k$-Abelian equivalence classes,
we need to count the number of the functions $f_w$.
Not every function
\begin{math}
  f: A^k \to \{0, \dots, n - k + 1\}
\end{math}
is possible.
It must be
\begin{equation} \label{eq:sumf}
  \sum_{t \in A^k} f(t) = n - k + 1,
\end{equation}
and there are also other restrictions,
which are determined in Lemma \ref{lem:euler}.

If a function
\begin{math}
  f: A^k \to \No
\end{math}
is given, then a directed multigraph $G_f$ can be defined as follows:
the set of vertices is $A^{k-1}$,
and if $t = s_1 a = b s_2$, where $a, b \in A$,
then there are $f(t)$ edges from $s_1$ to $s_2$.
If $f = f_w$, then this multigraph is related to the Rauzy graph of $w$.
In the next lemma,
$\deg^-$ denotes the indegree and $\deg^+$ the outdegree of a vertex in $G_f$.

\begin{lemma} \label{lem:euler}
For a function
\begin{math}
  f: A^k \to \No
\end{math}
and words $s_1, s_2 \in A^{k - 1}$, the following are equivalent:
\begin{enumerate}[(i)]
\item \label{euler1}
  there is a number $n$ and a word $w \in S(s_1, s_2, n)$
  such that $f = f_w$,
\item \label{euler2}
  there is an Eulerian path from $s_1$ to $s_2$ in $G_f$,
\item \label{euler3}
  the underlying graph of $G_f$ is connected,
  except possibly for some isolated vertices,
  and $\deg^-(s) = \deg^+(s)$ for every vertex $s$,
  except that if $s_1 \ne s_2$,
  then $\deg^-(s_1) = \deg^+(s_1) - 1$ and $\deg^-(s_2) = \deg^+(s_2) + 1$,
\item \label{euler4}
  the underlying graph of $G_f$ is connected,
  except possibly for some isolated vertices, and
  \begin{equation} \label{eq:fsystem}
      \sum_{a \in A} f(as) = \sum_{a \in A} f(sa) + c_s
      \qquad (s \in A^{k - 1}),
  \end{equation}
  where
  \begin{equation*}
      c_s = \begin{cases}
          -1, &\text{if $s = s_1 \ne s_2$}, \\
          1, &\text{if $s = s_2 \ne s_1$}, \\
          0, &\text{otherwise},
      \end{cases}
  \end{equation*}
\end{enumerate}
\end{lemma}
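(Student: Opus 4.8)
The plan is to prove all four statements equivalent by translating back and forth between words in $S(s_1,s_2,n)$ and walks in the multigraph $G_f$, and then invoking the classical Euler theorem for directed multigraphs; conditions \eqref{euler2}, \eqref{euler3}, \eqref{euler4} are really just three presentations of the same graph-theoretic fact, so the only ``new'' content is the dictionary connecting \eqref{euler1} to walks.

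First I would establish \eqref{euler1} $\Leftrightarrow$ \eqref{euler2} via the standard correspondence between a word and a walk in $G_f$. Reading a word $w\in S(s_1,s_2,n)$ from left to right, its successive length-$k$ factors form a walk whose $i$-th edge is exactly the $i$-th factor $t$, viewed as an edge from $\pref{k-1}(t)$ to $\suff{k-1}(t)$; this is consistent because the length-$(k-1)$ suffix of one factor is the length-$(k-1)$ prefix of the next. The walk starts at $\pref{k-1}(w)=s_1$, ends at $\suff{k-1}(w)=s_2$, and traverses the edge $t$ exactly $|w|_t=f_w(t)$ times. Conversely, a walk from $s_1$ to $s_2$ is read off as a word by starting from $s_1$ and appending the last letter of each successive edge, yielding a word of length $(k-1)+(\text{number of edges})$ with the prescribed factor counts. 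Under this correspondence a word realizing $f$ is precisely a walk using each edge $t$ exactly $f(t)$ times, i.e.\ an Eulerian path from $s_1$ to $s_2$ in $G_f$. Along the way I would record the bookkeeping $\sum_{t\in A^k} f(t)=n-k+1$ (the number of edges) and dispose of the degenerate case $f\equiv 0$: then there are no edges, $n=k-1$, and both \eqref{euler1} and \eqref{euler2} hold exactly when $s_1=s_2$ (the word being $s_1$ itself, the walk being empty).

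Next I would handle \eqref{euler2} $\Leftrightarrow$ \eqref{euler3}, which is exactly Euler's theorem for directed multigraphs. The degrees are read directly off $f$: the edges leaving a vertex $s$ are the factors $t$ with $\pref{k-1}(t)=s$, so $\deg^+(s)=\sum_{a\in A}f(sa)$, and dually $\deg^-(s)=\sum_{a\in A}f(as)$. Euler's theorem then says that a directed multigraph whose underlying graph is connected apart from isolated vertices has an Eulerian path from $s_1$ to $s_2$ iff in- and out-degrees agree at every vertex, with the sole exception (when $s_1\neq s_2$) that $s_1$ has one excess outgoing edge and $s_2$ one excess incoming edge; this is verbatim condition \eqref{euler3}. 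For the necessity of the connectivity clause I note that an Eulerian path meets every non-isolated vertex, hence they all lie on one trail. Finally, \eqref{euler3} $\Leftrightarrow$ \eqref{euler4} is immediate: substituting the two degree formulas turns $\deg^-(s)-\deg^+(s)$ into $\sum_a f(as)-\sum_a f(sa)$, and the prescribed excesses at $s_1,s_2$ are exactly the constants $c_s$, so the two conditions are the same statement written arithmetically rather than graph-theoretically.

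I expect the genuinely delicate point to be the careful treatment of the word–walk dictionary at the boundary rather than the combinatorics of Euler's theorem itself. Specifically, one must read ``connected'' throughout as ``connected after deleting isolated vertices,'' so that length-$(k-1)$ words occurring in no factor of $w$ do not spuriously disconnect $G_f$, and one must check the short-word cases (length exactly $k-1$, giving the empty walk and the edgeless graph) so that the equivalence of \eqref{euler1} with the others survives when $G_f$ has no edges. The remainder is the classical Euler theorem, which I would either cite or supply a short standard proof of, extending a maximal trail greedily and splicing in cycles through any unused edge via connectivity.
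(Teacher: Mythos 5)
Your proposal is correct and follows essentially the same route as the paper: the word--walk dictionary for \eqref{euler1}~$\Leftrightarrow$~\eqref{euler2}, an appeal to the classical Euler theorem for directed multigraphs for \eqref{euler2}~$\Leftrightarrow$~\eqref{euler3}, and the observation that \eqref{euler4} is just \eqref{euler3} rewritten via $\deg^+(s)=\sum_{a\in A}f(sa)$ and $\deg^-(s)=\sum_{a\in A}f(as)$. Your extra care with the edgeless case and with isolated vertices is a welcome refinement of the paper's terser argument, but not a different method.
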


\begin{proof}
\eqref{euler1} $\Leftrightarrow$ \eqref{euler2}:
$w = a_1 \dots a_n \in S(s_1, s_2, n)$ and $f = f_w$ if and only if
\begin{equation*}
  s_1 = a_1 \dots a_{k-1}
  \rightarrow a_2 \dots a_{k}
  \rightarrow \dots
  \rightarrow a_{n - k + 2} \dots a_{n} = s_2
\end{equation*}
is an Eulerian path in $G_f$.

\eqref{euler2} $\Leftrightarrow$ \eqref{euler3}:
This is well known.

\eqref{euler3} $\Leftrightarrow$ \eqref{euler4}:
\eqref{euler4} is just a reformulation of \eqref{euler3}
in terms of the function $f$.
\end{proof}

In the next lemma we consider the independence of homogeneous systems
related to the equations \eqref{eq:fsystem} and \eqref{eq:sumf}.

\begin{lemma} \label{lem:degrsyst}
Let $x_t$, where $t \in A^k$, be $m^k$ unknowns.
The system of equations
\begin{equation} \label{eq:system}
  \sum_{a \in A} x_{as} = \sum_{a \in A} x_{sa}
  \qquad (s \in A^{k-1})
\end{equation}
is not independent, but all of its proper subsystems are.
If we add the equation
\begin{equation} \label{eq:eq}
  \sum_{t \in A^k} x_{t} = 0
\end{equation}
to one of these independent systems, then the system remains independent.
\end{lemma}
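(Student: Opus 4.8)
The plan is to regard the two sides of each equation in \eqref{eq:system} as a single linear form in the unknowns and to exploit the fact that each unknown occurs in at most two of these forms. For $s \in A^{k-1}$, write $E_s = \sum_{a \in A} x_{as} - \sum_{a \in A} x_{sa}$ for the form attached to the equation indexed by $s$. A fixed unknown $x_t$, with $t \in A^k$, occurs as the summand $+x_{as}$ exactly in $E_s$ for $s = \suff{k-1}(t)$, and as the summand $-x_{sa}$ exactly in $E_s$ for $s = \pref{k-1}(t)$. Consequently the coefficient of $x_t$ in an arbitrary combination $\sum_s \lambda_s E_s$ equals $\lambda_{\suff{k-1}(t)} - \lambda_{\pref{k-1}(t)}$. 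In the special case of a constant word $t = a^k$ we have $\pref{k-1}(t) = \suff{k-1}(t) = a^{k-1}$, so the two contributions cancel and $x_{a^k}$ appears in no $E_s$; these ``loop'' words will be decisive at the end.

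First I would pin down the relations among the $E_s$. Summing everything, in $\sum_{s \in A^{k-1}} E_s$ every non-constant $x_t$ receives coefficient $+1 - 1 = 0$ and every $x_{a^k}$ receives coefficient $0$; hence $\sum_s E_s = 0$ and the system \eqref{eq:system} is dependent. To see this is essentially the only relation, observe that $\sum_s \lambda_s E_s = 0$ forces $\lambda_{\suff{k-1}(t)} = \lambda_{\pref{k-1}(t)}$ for every $t \in A^k$; that is, $s \mapsto \lambda_s$ is constant along every edge of the de Bruijn graph on $A^{k-1}$, whose edge indexed by $t$ runs from $\pref{k-1}(t)$ to $\suff{k-1}(t)$. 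Since that graph is connected — from any vertex one reaches any target by shifting in its letters one at a time — the function $\lambda$ is globally constant, so the space of relations is one-dimensional, spanned by the all-ones vector (and the rank of \eqref{eq:system} is $m^{k-1} - 1$).

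The independence of every proper subsystem then follows immediately: if $T \subsetneq A^{k-1}$ and $\sum_{s \in T} \lambda_s E_s = 0$, extend $\lambda$ by $0$ outside $T$ to obtain a relation of the full system; by the previous step $\lambda$ is constant, and as it vanishes at some $s_0 \notin T$ it vanishes identically. For the final claim I would show that the all-ones form $L = \sum_{t \in A^k} x_t$ lies outside the row space of \eqref{eq:system}: if $L = \sum_s \lambda_s E_s$, comparing the coefficients of a loop $x_{a^k}$ gives $1 = \lambda_{a^{k-1}} - \lambda_{a^{k-1}} = 0$, a contradiction. Thus $L$ is independent of all the $E_s$, hence a fortiori independent of any proper subsystem of them, so adjoining \eqref{eq:eq} to an independent subsystem keeps it independent.

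I expect the only genuine point to watch is the connectivity of the de Bruijn graph, which is exactly what upgrades ``constant along edges'' to ``globally constant'' and thereby forces the relation space to be one-dimensional; the remaining arguments are bookkeeping of the $\pm 1$ incidences, with the constant words $a^k$ carrying the weight of the last step.
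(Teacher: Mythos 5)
Your proof is correct, but it argues on the opposite side of the duality from the paper. The paper establishes independence of the proper subsystems by exhibiting explicit witness vectors on which the forms are evaluated: for distinct $s_1,s_2\in A^{k-1}$ the assignment $x_t=|s_1s_2|_t$ satisfies every equation of \eqref{eq:system} except those indexed by $s_1$ and $s_2$, so any dependence supported on a proper subset is annihilated by evaluating on such witnesses; likewise the all-ones vector $x_t=1$ solves \eqref{eq:system} but not \eqref{eq:eq}, which gives the last claim. You instead work directly with the coefficients: you compute that $x_t$ appears in $\sum_s\lambda_sE_s$ with coefficient $\lambda_{\suff{k-1}(t)}-\lambda_{\pref{k-1}(t)}$, identify the relation space with functions constant along edges of the de Bruijn graph on $A^{k-1}$, and invoke its connectivity to conclude the relation space is exactly one-dimensional; the loop variables $x_{a^k}$, which occur in no $E_s$ but with coefficient $1$ in \eqref{eq:eq}, settle the final claim. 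Both arguments are sound and short. Yours has the merit of determining the relation space (hence the exact rank $m^{k-1}-1$) outright, which is the fact actually used in the subsequent counting theorem, and it makes the role of the de Bruijn graph explicit; the paper's witnesses are in effect the Eulerian-path structure of that same graph seen from the solution side, and they avoid any appeal to connectivity by producing concrete words $s_1s_2$. The one point you rightly flag as needing care, connectivity of the de Bruijn graph, is correctly justified by your observation that the word $uv$ traces a path from $u$ to $v$.
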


\begin{proof}
The sum of the equations \eqref{eq:system} is a trivial identity
\begin{math}
  \sum_{t \in A^k} x_{t} = \sum_{t \in A^k} x_{t},
\end{math}
so every one of these equations follows from the other $m^{k-1}-1$
equations. If $s_1, s_2 \in A^{k-1}$ are two different words,
then
\begin{math}
  x_t = |s_1 s_2|_t
\end{math}
for all $t$ is a solution of all the equations, except those with $s
= s_1$ or $s = s_2$. This proves that all subsystems are
independent. Addition of \eqref{eq:eq} keeps them independent,
because
\begin{math}
  x_t = 1
\end{math}
for all $t$ is a solution of the system \eqref{eq:system} but not of
\eqref{eq:eq}.
\end{proof}

\begin{theorem}
Let $k\geq 1$ and $m \geq 2$ be fixed numbers and let $A$ be an
$m$-letter alphabet. The number of $k$-Abelian equivalence classes
of $A^n$ is
\begin{math}
  \Theta(n^{m^{k} - m^{k - 1}}).
\end{math}
\end{theorem}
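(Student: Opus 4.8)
The plan is to establish matching bounds $O(n^{m^k - m^{k-1}})$ and $\Omega(n^{m^k - m^{k-1}})$, with the common exponent $d := m^k - m^{k-1}$ emerging as the dimension of the solution space of the flow equations. By the discussion preceding Lemma~\ref{lem:euler}, for $n \geq k-1$ the set $A^n$ partitions as $\bigsqcup_{s_1, s_2} S(s_1, s_2, n)$, each $k$-Abelian class lies in a single block (equivalent words share their prefix and suffix of length $k-1$ by Lemma~\ref{prefixsuffix}), and within a block the classes correspond bijectively to the functions $f_w$. By Lemma~\ref{lem:euler} these are exactly the $f : A^k \to \No$ satisfying the balance equations \eqref{eq:fsystem}, the count equation \eqref{eq:sumf}, and the connectivity condition. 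The key structural input is Lemma~\ref{lem:degrsyst}: the homogeneous system \eqref{eq:system} has rank $m^{k-1}-1$, and adjoining \eqref{eq:eq} raises the rank to $m^{k-1}$, so the real affine solution set of \eqref{eq:fsystem}+\eqref{eq:sumf} has dimension exactly $d = m^k - m^{k-1}$.

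For the upper bound I would fix $(s_1,s_2)$ and bound the number of admissible $f$. Since the constraint matrix has rank $m^{k-1}$, one can single out $m^{k-1}$ coordinates that are determined by the remaining $d$, so the projection onto those $d$ free coordinates is injective on the affine solution set. Each coordinate $f(t)$ lies in $\{0, \dots, n-k+1\}$ by \eqref{eq:sumf}, so the number of admissible $f$ is at most $(n-k+2)^d = O(n^d)$. Summing over the $m^{2(k-1)}$ pairs $(s_1,s_2)$, a constant in $n$, yields the total bound $O(n^d)$.

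For the lower bound I would work with a single pair $s_1 = s_2 = s$, so that \eqref{eq:fsystem} reduces to the homogeneous system \eqref{eq:system}. The base point is the all-ones function $f_0 \equiv 1$ on $A^k$, which is balanced (indegree $=$ outdegree $= m$ at every vertex of the de Bruijn graph $G_{f_0}$ on $A^{k-1}$) and has every edge present, hence connected; thus $M f_0$ is admissible for every $M \geq 1$, corresponding to words of length $n = M m^k + k - 1$. Let $L_0$ be the lattice of integer solutions of \eqref{eq:system} that also satisfy $\sum_t x_t = 0$; by Lemma~\ref{lem:degrsyst} it has rank $d$, so I fix linearly independent $v_1, \dots, v_d \in L_0$ and put $V = \max_{i,t} |v_i(t)|$. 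For a small constant $c < 1/(dV)$ and integers $a_1, \dots, a_d \in \{0, \dots, \lfloor cM \rfloor\}$, the function $f = M f_0 + \sum_i a_i v_i$ satisfies \eqref{eq:system}, has $\sum_t f(t) = M m^k$ (so a fixed length $n$), and has every coordinate $f(t) \geq M(1 - dcV) > 0$; hence $G_f$ has all edges present and is connected, and $f$ is admissible by Lemma~\ref{lem:euler}. Distinct tuples $(a_i)$ give distinct $f$ by linear independence, hence distinct classes, so there are at least $(\lfloor cM \rfloor + 1)^d = \Omega(M^d) = \Omega(n^d)$ classes for every $n$ of the form $M m^k + k - 1$. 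Since consecutive such values of $n$ differ by the constant $m^k$, the monotonicity of Lemma~\ref{lem:increasing} extends $\Omega(n^d)$ to all $n$, completing both directions.

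The main obstacle is the lower bound. The dimension count by itself only produces $\Omega(n^d)$ real points; the genuine work is certifying that a full $d$-dimensional, linearly growing family of these corresponds to \emph{actual words}. This is exactly what the strictly positive base $f_0$ buys us: bounding the perturbations by $\lfloor cM\rfloor$ keeps every coordinate positive, so the connectivity hypothesis of Lemma~\ref{lem:euler} is automatic, while the sum-zero property of $L_0$ pins the length. Matching the constant-gap admissible lengths to all $n$ through Lemma~\ref{lem:increasing} is then a routine final step.
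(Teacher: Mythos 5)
Your proof is correct, and it rests on the same pillars as the paper's: the partition of $A^n$ by length-$(k-1)$ prefix and suffix (via Lemma~\ref{prefixsuffix}), the Eulerian-path characterization of admissible frequency functions (Lemma~\ref{lem:euler}), the rank count of Lemma~\ref{lem:degrsyst} giving the exponent $d=m^k-m^{k-1}$, and the monotonicity of Lemma~\ref{lem:increasing}. Your upper bound is the paper's upper bound almost verbatim. The one place you genuinely diverge is the organization of the lower bound. The paper does \emph{not} fix the word length: it leaves all $d+1$ free parameters of the homogeneous system \eqref{eq:fsystem} unconstrained (handling integrality by restricting the $b_j$ to multiples of the lcm of the denominators of the $a_{ij}$), obtains $\Theta(n^{d+1})$ pairwise inequivalent words of various lengths below $n$, and then pigeonholes to find $\Omega(n^{d})$ of them sharing a common length before invoking Lemma~\ref{lem:increasing}. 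You instead intersect with the hyperplane \eqref{eq:eq}, work in the resulting rank-$d$ integer lattice, and perturb the scaled all-ones solution $Mf_0$ within a box small enough to keep every coordinate positive; this pins the length to $Mm^k+k-1$ outright, makes integrality automatic (your $v_i$ are integer vectors), and relegates Lemma~\ref{lem:increasing} to bridging the constant gap $m^k$ between admissible lengths. The two versions trade one extra free parameter against one division by $n$; yours is arguably the cleaner bookkeeping, while the paper's avoids having to exhibit an integral basis of the sum-zero sublattice. Both are complete and correct.
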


\begin{proof}
Let $n \geq 2k-2$,
\begin{math}
  f: A^k \to \{0, \dots, n-k+1\}
\end{math}
and $u,v \in A^{k-1}$. By Lemma \ref{lem:euler}, there is a
word $w \in S(u,v,n)$ such that $f = f_w$ only if $f$ satisfies
\eqref{eq:sumf} and \eqref{eq:fsystem}. Consider the system formed
by these equations. The function $f_w$ satisfies the equations for
every $w \in S(u,v,n)$, so the system has a solution. By Lemma
\ref{lem:degrsyst}, the rank of the coefficient matrix of the system
is $m^{k-1}$, so the general solution of this system is of the form
\begin{equation*}
  f(r_i) = \sum_{j=1}^{m^k - m^{k-1}} a_{ij} f(s_j) + b_i
  \qquad (i = 1, \dots, m^{k-1}),
\end{equation*}
where the words $r_i$ and $s_j$ form the set $A^k$ and $a_{ij},
b_i$ are rational numbers. Because
\begin{math}
  0 \leq f(s_j) \leq n-k+1,
\end{math}
there are
\begin{math}
  O(n^{m^k-m^{k-1}})
\end{math}
possible functions $f$.

Let $u=v$ and consider the system of equations \eqref{eq:fsystem}.
By Lemma \ref{lem:degrsyst}, the general solution of this
homogeneous system is of the form
\begin{equation} \label{eq:sol}
  f(r_i) = \sum_{j=1}^{m^k - m^{k-1} + 1} a_{ij} f(s_j)
  \qquad (i = 1, \dots, m^{k-1}-1),
\end{equation}
where the words $r_i$ and $s_j$ form the set $A^k$ and $a_{ij}$
are rational numbers. The coefficients $a_{ij}$ do not depend on
$n$. Let
\begin{equation*}
  c = \max \set{ \textstyle \sum_{j=1}^{m^k - m^{k-1} + 1}
      |a_{ij}|}{1 \leq i \leq m^{k-1}-1}
\end{equation*}
and let $d$ be the least common multiple of the denominators of the
numbers $a_{ij}$. Every constant function $f$ satisfies the system
of equations. In particular,
\begin{math}
  f(t) = \lfloor {n}/{2m^k} \rfloor
\end{math}
for all $t$ is a solution of the system. If we let
\begin{equation*}
  f(s_j) = \left\lfloor \frac{n}{2m^k} \right\rfloor + b_j,
  \quad \text{where} \quad
  |b_j| < \frac{n}{2cm^k} - 1
  \quad \text{and} \quad
  d | b_j,
\end{equation*}
then the numbers
\begin{equation*}
  f(r_i) = \left\lfloor \frac{n}{2m^k} \right\rfloor
      + \sum_{j=1}^{m^k - m^{k-1} + 1} a_{ij} b_j
\end{equation*}
given by \eqref{eq:sol} are integers and
\begin{math}
  1 \leq f(t) \leq n / m^k - 1
\end{math}
for all $t \in A^k$. Because $f(t) \geq 1$ for all $t$, the
underlying graph of $G_f$ is connected, so by Lemma \ref{lem:euler}
there is a word $w \in S(u, v, |w|)$ such that $f = f_w$. Because
$f(t) \leq n / m^k - 1$ for all $t$, we get
\begin{equation*}
  |w| = \sum_{t \in A^k} f(t) + k - 1
  \leq n - m^k + k - 1 < n .
\end{equation*}
There are
\begin{math}
  \Theta(n^{m^k - m^{k-1} + 1})
\end{math}
ways to choose the numbers $b_j$. Every choice gives a different
function $f = f_w$ for some $w \in S(u, v, |w|)$ such that $|w| <
n$. Let these words be $w_1, \dots, w_N$. No two of them are
$k$-Abelian equivalent. Among these words there are at least $N / n$
words of equal length.
By Lemma \ref{lem:increasing},
there are at least $N / n$ words of length $n$
such that no two of them are $k$-Abelian equivalent,
and $N / n = \Omega(n^{m^k - m^{k - 1}})$.
\end{proof}

\section{$k$-Abelian complexity \& periodicity}\label{periodicity}

In this section we prove that if $\mathcal {P}^{(k)}_\omega (n_0)<q^{(k)}(n_0)$ for some $k\in \ints^+ \cup \{+\infty\}$ and $n_0 \geq 1,$ then $\omega$ is ultimately periodic (see Corollary~\ref{evperiodic} below). For this purpose we introduce an auxiliary family of equivalence relations $\mathcal {R}_k$ on $A^*$ defined as follows: Let $k\in \ints^+ \cup \{+\infty\}.$ Give $u,v \in A^*$ we write $u\mathcal {R}_k v,$ if and only if $u\thicksim _{1} v$ (i.e., $u\thicksim_{ab}v$) and $u$ and $v$ share a common prefix and a common suffix of lengths $k-1.$ In case $|u|<k-1,$ then $u\mathcal {R}_k v$ means  $u=v.$

It follows immediately from Lemma~\ref{prefixsuffix} that

\begin{equation}\label{imp} u\thicksim_{k} v \Longrightarrow u\mathcal {R}_k v.\end{equation}

In general the converse is not true: For example, taking $u=0011$ and $v=0101$ we see that $u\mathcal {R}_2v$ yet $u$ and $v$ are not $2$-Abelian equivalent.
However, in view of Proposition~\ref{simplificationprop} we have:

\begin{corollary} Let $u$ and $v$ be two factors of a Sturmian word $\omega$, and $k\in \ints^+ \cup \{+\infty\}.$ Then $u\thicksim_{k} v$ if and only if $u\mathcal {R}_k v.$
\end{corollary}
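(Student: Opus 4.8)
The plan is to derive this corollary directly from two facts already in place, leaving only a small amount of bookkeeping about prefix/suffix lengths and the two boundary values of $k$. The forward implication $u \thicksim_k v \Rightarrow u \mathcal{R}_k v$ holds for arbitrary words, not merely factors of a Sturmian word, and is precisely \eqref{imp}, itself an immediate consequence of Lemma~\ref{prefixsuffix}. Hence this half needs no Sturmian hypothesis and requires no further work.

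For the converse $u \mathcal{R}_k v \Rightarrow u \thicksim_k v$, the assumption that $u$ and $v$ are factors of a common Sturmian word becomes essential, and the engine is the ``Thus'' clause of Proposition~\ref{simplificationprop}: for factors of the same episturmian (in particular Sturmian) word, $u \thicksim_k v$ holds if and only if $u \thicksim_1 v$ and $u, v$ share a common prefix and a common suffix of length $\min\{|u|, k-1\}$. The only discrepancy to reconcile is that $\mathcal{R}_k$ is phrased using shared prefixes and suffixes of length $k-1$, whereas the proposition uses length $\min\{|u|, k-1\}$. I would split into two cases. If $|u| \geq k-1$, then $\min\{|u|, k-1\} = k-1$, so the prefix/suffix condition defining $u \mathcal{R}_k v$ coincides verbatim with the one in Proposition~\ref{simplificationprop}, and the converse follows at once. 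If $|u| < k-1$, then by definition $u \mathcal{R}_k v$ means $u = v$, which trivially forces $u \thicksim_k v$; this is consistent with the first item of Lemma~\ref{lem:basic}, which already guarantees that $k$-Abelian equivalent words of equal length at most $2k-1$ must be equal.

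The remaining step is to dispatch the two extreme values of $k$ that lie outside the range $2 \leq k < +\infty$ to which Proposition~\ref{simplificationprop} literally applies. For $k = 1$, both $u \thicksim_1 v$ and $u \mathcal{R}_1 v$ reduce to Abelian equivalence, the prefix/suffix condition of length $k-1 = 0$ being vacuous, so the stated equivalence is an identity of relations. For $k = +\infty$, the convention $|u| < k-1$ applies to every finite word, so $u \mathcal{R}_{+\infty} v$ means $u = v$, which is exactly the meaning of $u \thicksim_{+\infty} v$. I expect no genuine obstacle here; the only points demanding care are the matching of the two slightly different formulations of the prefix/suffix condition and the clean treatment of the boundary cases $k \in \{1, +\infty\}$, which Proposition~\ref{simplificationprop} does not directly cover.
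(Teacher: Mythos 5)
Your proposal is correct and follows essentially the same route as the paper, which presents this corollary as an immediate consequence of the implication \eqref{imp} (for the forward direction) and Proposition~\ref{simplificationprop} (for the converse). Your additional bookkeeping---reconciling the length $k-1$ in the definition of $\mathcal{R}_k$ with the length $\min\{|u|,k-1\}$ in the proposition, and handling the boundary cases $k=1$ and $k=+\infty$ separately---is exactly the detail the paper leaves implicit, and it is handled correctly.
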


Let $\omega \in A^\nats.$ Associated to the relation $\mathcal {R}_k$ is a complexity function, denoted $\p^{(k)}_\omega (n),$ which counts the number of distinct $\mathcal {R}_k$ equivalence classes of factors of $\omega$ of length $n.$ It follows from (\ref{imp}) above that for each $n$ we have

\begin{equation}\label{comps}\p^{(k)}_\omega (n) \leq \mathcal {P}^{(k)}_\omega (n).\end{equation}

We recall the function  $q^{(k)}:\nats \rightarrow \nats$  ($k\in \ints^+ \cup \{+\infty\})$ defined by

\[ q^{(k)}(n) =
\left\{\begin{array}{ll} n+1 \,\,\,&\mbox{for}\,\, n\leq 2k-1\\
2k \,\,\,&\mbox{for}\,\, n\geq 2k
\end{array}
\right.
\]

\begin{theorem}\label{theoremevperiodic2}  Let $\omega=a_0a_1a_2\ldots \in A^\nats$  and
$k \in \ints ^+ \cup \{+\infty\}.$ If $\p_{\omega}^{(k)}(n_0)<q^{(k)}(n_0)$ for some $n_0 \geq 1,$ then $\omega$ is ultimately periodic.
\end{theorem}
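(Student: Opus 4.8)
The plan is to dispose of the easy range of $n_0$ by reducing to the Morse--Hedlund theorem \cite{MorHed1940}, and to isolate the genuine content in the range $n_0\ge 2k$. When $k=+\infty$ the relation $\mathcal{R}_{+\infty}$ is equality, $\p^{(+\infty)}_\omega=\p_\omega$ and $q^{(+\infty)}(n)=n+1$, so the statement is exactly Morse--Hedlund and nothing is left to prove; assume henceforth that $k$ is finite. The first step is the observation that $\mathcal{R}_k$ collapses to equality on short words: if $|u|=|v|=n$ with $1\le n\le 2k-1$ and $u\,\mathcal{R}_k\,v$, then $u=v$. Indeed, for $n\le 2k-2$ the prefix $\pref{k-1}(u)$ and the suffix $\suff{k-1}(u)$ together already cover all $n$ positions, and for $n=2k-1$ they cover all but the central letter, whose value is then forced by the equality of Parikh vectors. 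Consequently $\p^{(k)}_\omega(n)=\p_\omega(n)$ for every $n\le 2k-1$, and since $q^{(k)}(n)=n+1$ on this range, the hypothesis $\p^{(k)}_\omega(n_0)<q^{(k)}(n_0)$ with $n_0\le 2k-1$ reads $\p_\omega(n_0)\le n_0$, whence $\omega$ is ultimately periodic by Morse--Hedlund.

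It remains to handle $n_0\ge 2k$, where $q^{(k)}(n_0)=2k$ and the hypothesis is $\p^{(k)}_\omega(n_0)\le 2k-1$. I would argue by contraposition: assuming $\omega$ aperiodic, the goal is to show $\p^{(k)}_\omega(n)\ge 2k$ for all $n\ge 2k-1$. The key point is that there is a free base case at $n=2k-1$: by the previous paragraph $\p^{(k)}_\omega(2k-1)=\p_\omega(2k-1)$, and aperiodicity gives $\p_\omega(2k-1)\ge 2k$ by Morse--Hedlund. Thus the whole problem reduces to propagating the lower bound $\ge 2k$ from length $2k-1$ upward to all larger lengths.

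For the propagation I would set up a right-extension calculus on $\mathcal{R}_k$-classes. For $n\ge k-1$ the class of $uc$ (with $c\in A$) is determined by the class of $u$ together with $c$, since $\pref{k-1}(uc)=\pref{k-1}(u)$, the new Parikh vector is $\Psi(u)+e_c$, and $\suff{k-1}(uc)=\suff{k-2}(\suff{k-1}(u))\cdot c$. This yields an increment identity of the form $\p^{(k)}_\omega(n+1)-\p^{(k)}_\omega(n)=(\text{right-special excess})-(\text{collision defect})$, where the defect counts pairs of distinct length-$n$ classes that agree in prefix, in Parikh vector, and in all but the first letter of their length-$(k-1)$ suffix, and hence are merged by any common right extension. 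The main obstacle is precisely this defect: unlike the factor complexity $\p_\omega$, the function $\p^{(k)}_\omega$ is \emph{not} monotone in general---for the periodic word $(0011)^\infty$ one already finds $\p^{(2)}_\omega$ dropping from $4$ to $2$---so the bound cannot be propagated by mere monotonicity, and aperiodicity must enter in an essential way. Concretely, the collisions are enabled exactly by left-special factors of length $k-2$, and the heart of the proof is to show that for an aperiodic word the right-special extensions always compensate these collisions, so that the count never falls below $2k$. I expect this balancing of right-special against left-special behaviour---tying the two sides together through the recurrence structure of the factors, in the spirit of the bispecial analysis behind Theorem~\ref{2k}---to be the most delicate part of the argument.
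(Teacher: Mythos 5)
Your treatment of the easy ranges is correct and matches the paper: the case $k=+\infty$ is Morse--Hedlund, and for $n_0\leq 2k-1$ the relation $\mathcal{R}_k$ collapses to equality (prefix and suffix of length $k-1$ cover all positions except possibly a central letter, which the Parikh vector then forces), so the hypothesis reduces to $\p_\omega(n_0)\leq n_0$. Your observation that $\p^{(k)}_\omega$ is not monotone (the example $(0011)^\infty$ is a valid witness) is also a genuine and relevant point. However, for the only substantive case $n_0\geq 2k$ you do not give a proof: you set up a right-extension bookkeeping identity and then state that ``the heart of the proof is to show that for an aperiodic word the right-special extensions always compensate these collisions,'' explicitly deferring that step. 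That compensation claim is exactly the content of the theorem in this range, and no argument for it is offered, so the proposal has a genuine gap precisely where the difficulty lies. It is also unclear that an induction on $n$ for fixed $k$ can be pushed through at all, for the very reason you identify: the quantity being bounded can decrease as $n$ grows, and nothing in your sketch isolates what aperiodicity contributes at each single increment.

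The paper avoids this obstacle by inducting on $k$ rather than on $n$. Assuming $\omega$ aperiodic and $\p^{(k)}_\omega(n_0)<2k$ with $n_0\geq 2k$, one considers the shifted word $\nu=a_0^{-1}\omega$ and the surjection
\[
\Psi : {\mathcal F}_{\omega}(n_0)/\mathcal {R}_k \longrightarrow {\mathcal F}_{\nu}(n_0-2)/\mathcal {R}_{k-1},
\qquad \Psi([aub]_k)=[u]_{k-1},
\]
which is well defined because $\mathcal{R}_k$-equivalent words of length $n_0\geq 2k$ agree on their first and last letters. Aperiodicity supplies a left special factor $uc$ and a right special factor $dv$ of length $n_0-1$, and a short case analysis shows that $\Psi$ fails to be injective by at least $2$: either two distinct target classes each have a fibre of size at least $2$, or one fibre has size at least $3$. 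Hence $\p^{(k-1)}_\nu(n_0-2)\leq \p^{(k)}_\omega(n_0)-2<2(k-1)=q^{(k-1)}(n_0-2)$, and the induction hypothesis (with base case $k=1$, i.e.\ the Coven--Hedlund theorem on Abelian complexity) makes $\nu$, and so $\omega$, ultimately periodic. This descent in $k$ with a simultaneous decrease of $n_0$ by $2$ is the idea your proposal is missing; it converts the delicate ``balancing of right-special against left-special behaviour'' you anticipate into a single, checkable counting statement about the fibres of $\Psi$.
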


\begin{proof} The result is well known in case $k=+\infty$ (see \cite{MorHed1940}). For $k\in \ints^+,$ we proceed by induction on $k.$ In case $k=1,$ then $\mathcal {R}_1$ is simply the usual notion of Abelian equivalence and the result follows from \cite{CovHed}.

Now suppose $k>1$ and that  $\p_{\omega}^{(k)}(n_0)<q^{(k)}(n_0)$ for some $n_0 \geq 1.$ It follows immediately from the definition of $\mathcal{R}_k$ that  if $u\mathcal {R}_kv$ and $|u|\leq 2k-1,$ then $u=v.$ Thus, if $\p_\omega ^{(k)}(n_0)<q^{(k)}(n_0)$ where $n_0\leq 2k-1,$ then $\p_\omega (n_0)<n_0+1$ and so $\omega$ is ultimately periodic by the well known result of Morse and Hedlund in \cite{MorHed1940}.

Thus we suppose that $\p_\omega ^{(k)}(n_0)<2k$ for some $n_0\geq 2k.$  We claim that $\omega$ must be ultimately periodic. Suppose to the contrary that $\omega$ is aperiodic. We shall show that this implies that $\p_\nu ^{(k-1)}(n_0-2)<2(k-1)$ where $\nu=a_0^{-1}\omega$  denotes the first shift of $\omega,$ i.e., the word obtained from $\omega$ by removing the first letter of $\omega.$ Since $n_0-2\geq 2(k-1)$ we deduce that $\p_\nu ^{(k-1)}(n_0-2)<q^{(k-1)}(n_0-2).$  But then by induction hypothesis on $k,$ it follows that $\nu$ (and hence $\omega)$ is ultimately periodic, a contradiction.

Consider the map
\[\Psi : {\mathcal F}_{\omega}(n_0)/\mathcal {R}_k \longrightarrow {\mathcal F}_{\nu}(n_0-2)/\mathcal {R}_{k-1}\]

defined by
\[\Psi ([aub]_k) =[u]_{k-1}\]
where $a,b\in A,$  and $u\in A^*$ of length $n_0-2$ Here $[u ]_k$ denotes the $\mathcal {R}_k$ equivalence class of $u.$
To see that $\Psi$ is well defined, suppose $aub\mathcal {R}_k cud.$ Then since $k>1,$ it follows that $a=c$ and $b=d$ and thus that $u\mathcal {R}_1  v.$ Moreover as $aub$ and $cud$ share a common prefix and suffix of length $k,$ it follows that $u$ and $v$ share a common prefix and suffix of length $k-1.$ Thus $u\mathcal {R}_{k-1} v$ as required.
Clearly the mapping $\Psi$ is surjective, in fact for each $u\in {\mathcal F}_{\nu}(n_0-2)$ there exist $a,b\in A$ such that $aub\in {\mathcal F}_{\omega}(n_0).$ This is the reason for replacing  $\omega$ by $\nu.$

We now show that either there exist distinct classes $[u]_{k-1},[v]_{k-1}\in {\mathcal F}_{\nu}(n_0-2)/\mathcal {R}_{k-1}$ for which
\begin{equation}\label{(*)}\mbox{min}\{\mbox{Card}\left(\Psi^{-1}([u]_{k-1})\right), \mbox{Card}\left(\Psi^{-1}([v]_{k-1})\right)\}\geq 2,\end{equation} or there exists a class $[u]_{k-1}\in {\mathcal F}_{\nu}(n_0-2)/\mathcal {R}_{k-1}$ for which
\begin{equation}\label{(**)}\mbox{Card}\left(\Psi^{-1}([u]_{k-1})\right)\geq 3.\end{equation}
In either case it follows that
\[\mbox{Card}\left({\mathcal F}_{\nu}(n_0-2)/\mathcal {R}_{k-1}\right)\leq\mbox{Card}\left({\mathcal F}_{\omega}(n_0)/\mathcal {R}_k\right) -2< 2(k-1).\]

Since $\omega$ is assumed to be aperiodic, $\omega$ contains both a left special factor of the form $uc$ and a right special factor of the form $dv$ of lengths $n_0-1$ for some choice of $c,d \in A$ and $u,v\in A^*.$ Thus there exist  distinct letters $a,b \in A$ such that $auc$ and $buc$ are factors of $\omega.$ Moreover since $a\neq b,$ it follows that $[auc]_k\neq [buc]_k.$
Thus $\mbox{Card}\left(\Psi^{-1}([u]_{k-1})\right)\geq 2.$
Similarly, there  exist  distinct letters $a',b' \in A$ such that $dva'$ and $dvb'$ are factors of $\omega,$ and since $a'\neq b',$ it follows that $[dva']_k\neq [dvb']_k.$
Thus $\mbox{Card}\left(\Psi^{-1}([v]_{k-1})\right)\geq 2.$ In case $[u]_{k-1}\neq [v]_{k-1},$ we obtain the desired inequality~(\ref{(*)}). In case $[u]_{k-1}= [v]_{k-1},$ since $a\neq b$ and $a'\neq b'$ it follows that
\[\mbox{Card}\{[auc]_k, [buc]_k, [dua']_k,[dub']_k\}\geq 3\]
which yields the inequality~(\ref{(**)})
This completes the proof of Theorem~\ref{theoremevperiodic2}
\end{proof}

\begin{corollary}\label{evperiodic} Let $\omega \in A^\nats$  and
$k \in \ints ^+ \cup \{+\infty\}.$ If $\mathcal {P}^{(k)}_\omega (n_0)<q^{(k)}(n_0)$ for some $n_0 \geq 1$ then $\omega$ is ultimately periodic.
\end{corollary}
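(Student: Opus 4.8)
The plan is to derive this corollary directly from the already-established Theorem~\ref{theoremevperiodic2}, which is the identical statement phrased in terms of the auxiliary complexity $\p^{(k)}_\omega$ attached to the coarser relation $\mathcal{R}_k$. The only extra ingredient needed is a comparison between the two complexity functions, and this is exactly the content of the inequality~\eqref{comps}. So the strategy is not to redo any combinatorial work but merely to chain one inequality into the hypothesis of a theorem we may assume.

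Concretely, I would first recall the refinement implication~\eqref{imp}, namely $u\thicksim_{k}v \Rightarrow u\mathcal{R}_k v$, which follows from Lemma~\ref{prefixsuffix}. This says that each $\mathcal{R}_k$-equivalence class is a (possibly nontrivial) union of $\thicksim_{k}$-equivalence classes, so passing from $\thicksim_{k}$ to $\mathcal{R}_k$ can only merge classes together, never split them. Applying this to the factors of length $n_0$ of $\omega$ yields $\p^{(k)}_\omega(n)\leq \mathcal{P}^{(k)}_\omega(n)$ for every $n$, which is precisely~\eqref{comps} and is already recorded in the excerpt.

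With that in hand the argument closes immediately: assuming $\mathcal{P}^{(k)}_\omega(n_0)<q^{(k)}(n_0)$ for some $n_0\geq 1$, the comparison~\eqref{comps} gives
\[
  \p^{(k)}_\omega(n_0)\;\leq\;\mathcal{P}^{(k)}_\omega(n_0)\;<\;q^{(k)}(n_0),
\]
so the hypothesis of Theorem~\ref{theoremevperiodic2} is satisfied at the same value $n_0$. Invoking that theorem yields that $\omega$ is ultimately periodic, which is what we want.

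I do not expect any genuine obstacle here, since all of the real difficulty resides in Theorem~\ref{theoremevperiodic2} and its inductive reduction via the map $\Psi$. The only point meriting care is the \emph{direction} of the inequality in~\eqref{comps}: one must be sure that coarsening the equivalence relation decreases the count of classes, so that the strict upper bound on $\mathcal{P}^{(k)}_\omega$ transfers to a strict upper bound on $\p^{(k)}_\omega$ rather than the reverse. Once the refinement~\eqref{imp} is stated in the correct orientation, the corollary is a one-line consequence.
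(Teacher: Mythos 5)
Your proposal is correct and follows exactly the paper's own proof: both deduce $\p^{(k)}_\omega(n_0)\leq\mathcal{P}^{(k)}_\omega(n_0)<q^{(k)}(n_0)$ from the refinement inequality~(\ref{comps}) and then invoke Theorem~\ref{theoremevperiodic2}. Your added remark about checking the direction of the inequality (coarsening $\thicksim_k$ to $\mathcal{R}_k$ can only merge classes) is the right point of care and is consistent with the paper.
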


\begin{proof} As a consequence of the inequality (\ref{comps}), if $\mathcal {P}^{(k)}_\omega (n_0)<q^{(k)}(n_0)$ then
$\p_{\omega}^{(k)}(n_0)<q^{(k)}(n_0),$ whence by Theorem~\ref{theoremevperiodic2} it follows that $\omega$ is ultimately periodic.
\end{proof}

\noindent The same method of proof of Theorem~\ref{theoremevperiodic2} can be used to prove the following:

\begin{corollary}\label{theoremperiodic} Let $\omega $ be a bi-infinite word over the alphabet $A$ and
$k \in \ints ^+ \cup \{+\infty\}.$ If $\mathcal {P}^{(k)}_\omega (n_0)<q^{(k)}(n_0)$ for some $n_0 \geq 1,$ then $\omega$ is periodic.
\end{corollary}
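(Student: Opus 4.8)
The plan is to adapt the proof of Theorem~\ref{theoremevperiodic2} to the bi-infinite setting, where the key difference is that one can shift the window in both directions and extend factors on both sides. First I would observe that for a bi-infinite word $\omega \in A^\ints$, the notion of factor complexity behaves exactly as in the one-sided case, and in particular $\p^{(k)}_\omega(n) \le \mathcal{P}^{(k)}_\omega(n)$ still holds by \eqref{imp}. So by the same reduction as in Corollary~\ref{evperiodic}, it suffices to show that $\p^{(k)}_\omega(n_0) < q^{(k)}(n_0)$ forces $\omega$ to be \emph{periodic} (not merely ultimately periodic). The base cases $n_0 \le 2k-1$ reduce, exactly as before, to the condition $\p_\omega(n_0) < n_0 + 1$; here I would invoke the bi-infinite version of the Morse--Hedlund theorem, which states that a bi-infinite word with $\p_\omega(n) \le n$ for some $n$ is periodic (the two-sided statement is genuinely periodic, not just ultimately so, since there is no distinguished starting point). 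The case $k = 1$ likewise follows from the bi-infinite analogue of the Coven--Hedlund result in \cite{CovHed}.

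For the inductive step with $k > 1$ and $n_0 \ge 2k$, I would mimic the construction of the map $\Psi$, but now define it on the bi-infinite word by stripping one letter from each side. Concretely, assume for contradiction that $\omega$ is aperiodic, and pass to a shifted word $\nu$; in the bi-infinite case the natural object is to consider factors $aub$ of length $n_0$ and send $[aub]_k \mapsto [u]_{k-1}$, landing in ${\mathcal F}_{\omega}(n_0-2)/\mathcal{R}_{k-1}$ where now \emph{the same} bi-infinite word $\omega$ supplies the factors of length $n_0-2$ (there is no need to remove a first letter, since a bi-infinite word has no first letter). The surjectivity of $\Psi$ is immediate because every factor $u$ of length $n_0-2$ of $\omega$ extends to a factor $aub$ of length $n_0$ on both sides. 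The well-definedness argument is verbatim the one in Theorem~\ref{theoremevperiodic2}: if $aub \,\mathcal{R}_k\, cvd$ and $k>1$ then $a=c$, $b=d$, $u \,\mathcal{R}_1\, v$, and the shared prefixes/suffixes of length $k$ force shared prefixes/suffixes of length $k-1$, giving $u \,\mathcal{R}_{k-1}\, v$.

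Next I would reproduce the fiber-counting argument. Since $\omega$ is aperiodic, it contains a left special factor $uc$ and a right special factor $dv$ of length $n_0-1$, yielding distinct letters $a \ne b$ with $auc, buc$ factors of $\omega$, so $[auc]_k \ne [buc]_k$ and $\mathrm{Card}(\Psi^{-1}([u]_{k-1})) \ge 2$; symmetrically $\mathrm{Card}(\Psi^{-1}([v]_{k-1})) \ge 2$. Splitting into the cases $[u]_{k-1} \ne [v]_{k-1}$ (giving two distinct fibers of size $\ge 2$, inequality~(\ref{(*)})) and $[u]_{k-1} = [v]_{k-1}$ (giving a single fiber of size $\ge 3$, inequality~(\ref{(**)})), I conclude in either case that
\[
  \mathrm{Card}\left({\mathcal F}_{\omega}(n_0-2)/\mathcal{R}_{k-1}\right) \le \mathrm{Card}\left({\mathcal F}_{\omega}(n_0)/\mathcal{R}_k\right) - 2 < 2(k-1),
\]
so $\p^{(k-1)}_\omega(n_0-2) < q^{(k-1)}(n_0-2)$. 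The induction hypothesis then makes $\omega$ periodic, contradicting aperiodicity, and the proof closes.

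The main obstacle is \emph{strengthening the conclusion from ultimately periodic to genuinely periodic}. In the one-sided proof, the output of induction is only ``ultimately periodic,'' and the passage to $\nu = a_0^{-1}\omega$ is harmless because removing a finite prefix does not affect ultimate periodicity. In the bi-infinite case I must instead ensure each step preserves two-sided periodicity. The cleanest way is to verify that at the base of the induction the bi-infinite Morse--Hedlund and Coven--Hedlund statements already deliver genuine periodicity, and then to check that the inductive step is purely a complexity inequality that feeds back into the same periodicity conclusion --- so no finite truncation ever enters and the distinction between periodic and ultimately periodic never degrades. I would therefore state carefully at the outset that the two-sided low-complexity theorems yield periodicity, and remark that aside from this the argument is word-for-word that of Theorem~\ref{theoremevperiodic2}.
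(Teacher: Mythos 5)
Your proposal is correct and follows exactly the route the paper intends: the paper gives no separate argument for this corollary, saying only that ``the same method of proof of Theorem~\ref{theoremevperiodic2} can be used,'' and your write-up is a careful execution of that method, correctly noting that the shift to $\nu$ becomes unnecessary for bi-infinite words and that the base cases (two-sided Morse--Hedlund and Remark~4.07 of \cite{CovHed}) already deliver genuine periodicity. No gaps.
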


\noindent We conclude this section with a few remarks:

\begin{remark} \rm{ In the special case $k=+\infty,$ the condition given in Corollary~\ref{evperiodic} gives a characterization of ultimately periodic words by means of factor complexity: $\omega \in A^\nats$ is ultimately periodic if and only if $\p_{\omega}(n_0)<n_0+1$ for some $n_0\geq 1.$
However, $k$-Abelian complexity does not yield such a characterization. Indeed, both Sturmian words and
the ultimately periodic word  $01^\infty = 0111\cdots$ have the same Abelian complexity. More generally, the ultimately periodic word
$0^{2k-1}1^\infty\ldots$  has the same $k$-Abelian complexity as a Sturmian word (see Theorem~\ref{thm:factcompl} below).}
\end{remark}

\begin{remark} \rm{The result of Corollary~\ref{theoremperiodic} is already known to be true in the special cases $k=+\infty$ (see \cite{MorHed1940}) and $k=1$ (see  Remark 4.07 in \cite{CovHed}). In these special cases, the converse is also true. But for general $2\leq k<+\infty $ the converse is false. For instance, let $\mbox{Card}(A)=5,$ and let $u$ be a word containing at least one occurrence of every $x\in A^3.$ Let $\omega$ be the periodic word $\omega =\ldots uuuu\ldots.$ Then $\p^{(2)}_\omega (n)\geq 5$ for every $n\geq 1.$ }
\end{remark}

\section{$k$-Abelian complexity of Sturmian words}\label{Sturmwords}

In this section we determine the $k$-Abelian complexity of Sturmian words and show that for each $k,$ the complexity function $\mathcal {P}^{(k)}$ completely characterizes Sturmian words amongst all aperiodic words. More precisely:

\begin{theorem}\label{thm:factcompl}
Fix $k\in \ints^+ \cup \{+\infty\}.$ Let  $\omega \in A^\nats$ be an aperiodic word. The following conditions are equivalent:
\begin{itemize}
\item  $\omega$ is a balanced binary word, that is, {\it Sturmian}.
\item \begin{math}
  \fc{k}{\omega}(n) = q^{(k)}(n) =
  \begin{cases}
      n + 1 & \text{for $0 \leq n \leq 2k - 1$} \\
      2k & \text{for $n \geq 2k$}
  \end{cases}.
\end{math}
\end{itemize}
\end{theorem}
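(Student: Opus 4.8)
The plan is to prove both implications of the equivalence, exploiting the fact (Proposition~\ref{simplificationprop} and its corollary) that for factors of a Sturmian word, $k$-Abelian equivalence is completely controlled by Abelian equivalence together with agreement of prefixes and suffixes of length $\min\{|u|,k-1\}$. The forward direction (Sturmian $\Rightarrow$ the formula) is where the real computation lies; the reverse direction (an aperiodic word achieving $q^{(k)}$ must be Sturmian) I expect to follow relatively quickly from Corollary~\ref{evperiodic} together with the known $k=1$ and $k=+\infty$ cases.

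For the forward direction, fix a Sturmian word $\omega\in\{a,b\}^\nats$ and compute $\fc{k}{\omega}(n)$ directly. First I would dispose of the easy range $0\le n\le 2k-1$: here any two distinct $k$-Abelian equivalent factors would have length $\le 2k-1$, which by the first item of Lemma~\ref{lem:basic} forces them to be equal, so $\fc{k}{\omega}(n)=\p_\omega(n)=n+1$, the ordinary Sturmian factor complexity. The substantive case is $n\ge 2k$. By the corollary to Proposition~\ref{simplificationprop}, two factors $u,v$ of $\omega$ of length $n$ are $k$-Abelian equivalent iff they are Abelian equivalent and share a common prefix and a common suffix of length $k-1$. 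Sturmian words are balanced, so for a fixed length $n$ the factors fall into exactly two Abelian classes (differing by one in the count of, say, $a$); this is the classical statement that $\abp_\omega(n)=2$. Thus I must count, within each Abelian class, how many distinct pairs $(\pref{k-1}(u),\suff{k-1}(u))$ arise. The idea is that the prefix of length $k-1$ ranges over the $k$ factors of length $k-1$, and for a Sturmian word the suffix is then almost always determined: given the Abelian class and the prefix of length $k-1$, the factor of length $n$ is essentially determined except at the single right-special (resp.\ left-special) factor, where a branching occurs. Carefully bookkeeping the number of admissible prefix/suffix pairs against the two Abelian classes should yield exactly $2k$. The main obstacle is this counting step: I must show that across the two Abelian classes the total number of realizable $(\text{prefix},\text{suffix})$ pairs is exactly $2k$ and not more or fewer, which requires using the uniqueness of the right-special and left-special factors of each length in a Sturmian word (stated in the introduction, together with the fact that the bispecial factors are palindromes) and checking that the branching contributes the right overlap between the two Abelian classes.

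For the reverse direction, suppose $\omega$ is aperiodic and $\fc{k}{\omega}(n)=q^{(k)}(n)$ for all $n\ge 1$. Taking $n=1$ gives $\fc{k}{\omega}(1)=2$, so $\omega$ is over a binary alphabet. It then remains to show $\omega$ is balanced. I would argue by contraposition through the Abelian complexity: if $\omega$ is binary and \emph{not} balanced, then its Abelian complexity $\abp_\omega=\fc{1}{\omega}$ satisfies $\abp_\omega(n)\ge 3$ for some $n$ (the Coven--Hedlund characterization, reference \cite{CovHed}), and since $\fc{1}{\omega}(n)\le \fc{k}{\omega}(n)$ whenever $k\ge 1$ (each $k$-Abelian class refines the Abelian class), one would need $\fc{k}{\omega}(n)\ge 3>2=q^{(1)}(n)$ at that $n$ for small $n$, contradicting $\fc{k}{\omega}(n)=q^{(k)}(n)=2$ in the stable range. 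The only care needed is to line up the length ranges so that the value $q^{(k)}(n)=2k$ for large $n$ is reconciled with the Abelian obstruction occurring at a specific $n$; alternatively, one appeals directly to Corollary~\ref{evperiodic} to rule out complexity dropping below $q^{(k)}$ and combines this with the classical characterizations for $k=1$ and $k=+\infty$ to pin down balancedness. This direction I expect to be routine once the forward count is in hand.
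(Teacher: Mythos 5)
Your easy range $0\le n\le 2k-1$ matches the paper. For $n\ge 2k$ in the forward direction, however, you have only sketched the decisive step. The counting of realizable (Abelian class, prefix, suffix) triples is exactly where the difficulty lives, and you leave it as ``careful bookkeeping.'' The paper avoids this count entirely: it invokes Lemma~\ref{lem:facttrans} (consecutive factors in lexicographic order differ by a single swap $\swap{\sigma(i)}$, with $\sigma$ a permutation of $\{1,\dots,n\}$), observes via Theorem~\ref{2k} that the swap preserves the $k$-Abelian class whenever $k\le\sigma(i)\le n-k$, which collapses the $n+1$ factors into at most $2k$ classes, and then gets the matching lower bound for free from Corollary~\ref{evperiodic}. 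If you pursue your route, note that you too only need the upper bound $\fc{k}{\omega}(n)\le 2k$; trying to count ``exactly $2k$'' directly is unnecessary extra work.

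The reverse direction as you propose it does not work, and the issue is not merely ``lining up the length ranges.'' From $\fc{k}{\omega}(n)=q^{(k)}(n)=2k$ and the refinement inequality $\abp_\omega(n)\le\fc{k}{\omega}(n)$ you get only $\abp_\omega(n)\le 2k$, which for $k\ge 2$ is perfectly consistent with $\abp_\omega(n)=3$, i.e., with $\omega$ being unbalanced. The inequality points the wrong way, so the Coven--Hedlund obstruction is not triggered, and Corollary~\ref{evperiodic} cannot rescue this since it only forbids the complexity from dropping \emph{below} $q^{(k)}$. The paper's argument here is genuinely different and more delicate: assuming $\omega$ is binary, aperiodic and not Sturmian, it locates the smallest $n$ with ${\mathcal F}_{\omega}(n)={\mathcal F}_{\eta}(n)$ for some Sturmian $\eta$ but ${\mathcal F}_{\omega}(n+1)$ equal to no Sturmian factor set, runs a case analysis on the bispecial factor $B$ of length $n-1$ (two of the three cases force ultimate periodicity, via a palindrome/period argument), and in the surviving case shows that the extra factor $aBa$ lies in a new Abelian (hence $k$-Abelian) class, giving $\fc{k}{\omega}(n+1)=q^{(k)}(n+1)+1$ and the desired contradiction. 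You need something of this kind; a direct appeal to the $k=1$ characterization is not available under your hypothesis.
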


Our proof of Theorem~\ref{thm:factcompl} will make use of the following functions $\swap{i}$, which transform binary words by changing
the letters around a specific point. For words $w \in \{0,1\}^n$ we
define $\swap{1}, \dots, \swap{n}$ as follows:
\begin{equation*}
  \swap{i}(w) = \begin{cases}
  u10v, &\text{if $i < n$, $w = u01v$ and $|u0| = i$}, \\
  u1, &\text{if $i = n$ and $w = u0$}.
  \end{cases}
\end{equation*}

\begin{lemma} \label{lem:facttrans}
Let $n \geq 1$ and let $w \in \{0,1\}^{\omega}$ be Sturmian. There
is a word $u_1 \in \{0,1\}^n$ and a permutation $\sigma$ of $\{1,
\dots, n\}$ such that if $u_{i+1} = \swap{\sigma(i)}(u_i)$ for $i =
1, \dots, n$, then $u_1, \dots, u_{n+1}$ are the factors of $w$ of
length $n$.
\end{lemma}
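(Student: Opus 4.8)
The plan is to realize the Sturmian word $w$ as a coding of an irrational circle rotation and to read off the chain of factors geometrically. Write $w$ with irrational slope $\alpha \in (0,1)$ and some intercept, working on the circle $\reals/\ints$ with the base partition $I_0 = [0, 1-\alpha)$, $I_1 = [1-\alpha, 1)$, so that the factor of length $n$ coded at intercept $x$ is $b_0(x) \cdots b_{n-1}(x)$, where $b_j(x) = 1$ iff $\{x + j\alpha\} \in I_1$ and $b_j(x) = 0$ otherwise. Each coordinate $b_j$ is locally constant and jumps only when $\{x + j\alpha\}$ meets a boundary of the base partition, i.e. when $x \equiv -j\alpha$ (the boundary $0$) or $x \equiv -(j+1)\alpha$ (the boundary $1-\alpha$). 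First I would record that the union of all these jump points, over $j = 0, \dots, n-1$, is exactly the set of $n+1$ distinct points $\{-j\alpha : 0 \le j \le n\}$. These cut the circle into $n+1$ arcs; the factor map is constant on each arc, and since $w$ is Sturmian it has exactly $n+1$ factors of length $n$ and each arc-word occurs in $w$, so the map from arcs to factors is a bijection.

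The heart of the argument is the local analysis at a single crossing, which is the step I expect to be the main obstacle: it requires keeping careful track of the left and right limits at each of the two boundary values $0$ and $1-\alpha$, for the two families of jump points. As $x$ increases through the point $-j\alpha$ I would determine exactly which coordinates flip and in which direction. For $1 \le j \le n-1$ both $b_{j-1}$ and $b_j$ jump, with $b_{j-1}: 0 \to 1$ (crossing the boundary $1-\alpha$) and $b_j: 1 \to 0$ (crossing the boundary $0$), so the factor changes from $\cdots 01 \cdots$ to $\cdots 10 \cdots$ in positions $j, j+1$; this is precisely the application of $\swap{j}$. At $j = n$ only the last coordinate jumps, $b_{n-1}: 0 \to 1$, which is $\swap{n}$. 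At $j = 0$ only $b_0$ jumps, from $1$ to $0$, and this single crossing is the one not of the form $\swap{i}$.

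With this in hand the proof finishes by choosing the starting arc so as to avoid the exceptional point $0$. I would let $A_0$ be the arc lying immediately after $0$ in the direction of increasing $x$, set $u_1$ to be the factor coded on $A_0$, and then sweep $x$ once around the circle in the increasing direction, stopping just before returning to $0$. This sweep crosses each of the $n$ points $-\alpha, -2\alpha, \dots, -n\alpha$ exactly once and never crosses $0$, so it passes through all $n+1$ arcs and produces all $n+1$ factors $u_1, \dots, u_{n+1}$. By the local analysis each crossing applies some $\swap{j}$ with $j \in \{1, \dots, n\}$, and because the points $-\alpha, \dots, -n\alpha$ are pairwise distinct, each value $j$ occurs exactly once across the sweep. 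Defining $\sigma(i) = j$ when the $i$-th crossing is at $-j\alpha$ therefore yields a permutation $\sigma$ of $\{1, \dots, n\}$ with $u_{i+1} = \swap{\sigma(i)}(u_i)$, as required.
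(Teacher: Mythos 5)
Your proof is correct, but it follows a genuinely different route from the paper's. The paper lists the factors of length $n$ in \emph{lexicographic} order, invokes the characterization of Sturmian words via the lexicographic order (Theorem~1.1 of Bucci--De~Luca--Zamboni) to conclude that each consecutive pair differs by some $\swap{m}$, and then proves the indices $m$ are pairwise distinct by a balance argument: if the same $m$ occurred twice, the count $|\pref{m}(\cdot)|_1$ would increase by at least $2$ from the first to the last factor, violating the balance property. You instead work in the rotation model: the $n+1$ factors correspond bijectively to the $n+1$ arcs cut out of the circle by the points $-j\alpha$, $0\leq j\leq n$, your local analysis at each cut point identifies the crossing of $-j\alpha$ (for $1\leq j\leq n$) with an application of $\swap{j}$, and a single sweep starting just past the exceptional point $0$ visits every arc while crossing each of $-\alpha,\dots,-n\alpha$ exactly once. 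Your ordering of the $u_i$ is the circular order of the arcs rather than the lexicographic order, which is immaterial to the statement. The trade-off is clear: your argument makes the injectivity of $i\mapsto\sigma(i)$ automatic (one cut point per index, so no balance argument is needed) and is self-contained apart from the standard rotation representation of Sturmian factors, whereas the paper's argument stays purely combinatorial at the cost of citing the lexicographic characterization and running the balance estimate. The only points you leave implicit --- that the set of factors of a Sturmian word of slope $\alpha$ coincides with that of a mechanical word so the rotation coding applies, that irrationality of $\alpha$ makes the $n+1$ cut points distinct, and that the half-open interval convention makes the coding constant on each half-open arc --- are all standard and easily supplied.
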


\begin{proof}
Let $u_1, \dots, u_{n + 1}$ be the factors of $w$ of length $n$
in lexicographic order.
If follows from Theorem 1.1. in \cite{BuLuZa12}
that for every $i$ there is an $m$ such that $u_{i + 1} = \swap{m}(u_i)$.
It needs to be proved that the $m$'s are all different.
Let $u_{i + 1} = \swap{m}(u_i)$ and $u_{i' + 1} = \swap{m}(u_i')$.
For every $j$
\begin{equation*}
  |\pref{m}(u_j)|_1 \leq |\pref{m}(u_{j + 1})|_1
\end{equation*}
and for $j \in \{i, i'\}$
\begin{equation*}
  |\pref{m}(u_j)|_1 < |\pref{m}(u_{j + 1})|_1 .
\end{equation*}
If $i \ne i'$, then
\begin{equation*}
  |\pref{m}(u_1)|_1 + 2 \leq |\pref{m}(u_{n + 1})|_1
\end{equation*}
which contradicts the balance property of Sturmian words.
\end{proof}

\begin{example}
The factors of the Fibonacci word of length six are
\begin{alignat*}{3}
  u_1 = 001001,
        \quad u_2 &= 001010 = g_5(u_1),
      & \quad u_3 &= 010010 = g_2(u_2),
      & \quad u_4 &= 010100 = g_4(u_3), \\
  u_5 &= 100100 = g_1(u_4),
      & u_6 &= 100101 = g_6(u_5),
      & u_7 &= 101001 = g_3(u_6).
\end{alignat*}
We have $u_2 \kae{2} u_3 \kae{2} u_4$ and $u_6 \kae{2} u_7$. There
are no other 2-Abelian equivalences between these factors.
\end{example}

\begin{proof}[Proof of Theorem~\ref{thm:factcompl}]
First let us suppose $\omega \in \{0,1\}^\nats$ is Sturmian and let $1\leq k \leq+\infty.$
Let $n \leq 2k-1.$
By Lemma~\ref{lem:basic} two factors $u$ and $v$ of $\omega$ of length $n$ are  $k$-Abelian equivalent if and only $u=v.$ Thus
$\fc{k}{w}(n) = n+1$ as required.

Next let $n \geq 2k$ and let $u_1, \dots, u_{n+1}$ and $\sigma$ be as
in Lemma \ref{lem:facttrans}. If $k \leq \sigma(i) \leq n-k$, then
there are words $s, t \in \{0,1\}^*$ and $u, v \in \{0,1\}^{k-1}$
and letters $a, b \in \{0,1\}$ so that $u_i = su01vt$ and $u_{i+1} =
\swap{\sigma(i)}(u_i) = su10vt$. We prove that $u_i \kae{k}
u_{i+1}$. The prefixes and suffixes of $u_i$ and $u_{i+1}$ of length
$k-1$ are the same. The factors of $u_i$ of length $k$ are the
factors of $su$, $u01v$ and $vt$ of length $k$, and the factors of
$u_{i+1}$ of length $k$ are the factors of $su$, $u10v$ and $vt$ of
length $k$.
Because $u01v$ and $u10v$ are factors of $w$,
it follows that $u$ is right special and
$v$ is left special and hence equal to the reversal of $u$.
By Theorem~\ref{2k}, $u01v$ and $u10v$ are $k$-Abelian equivalent.
This proves that $u_i \kae{k} u_{i+1}$ if $k \leq \sigma(i) \leq n-k$.
Thus the words $u_1, \dots, u_{n+1}$
are in at most $2k$ different $k$-Abelian equivalence classes
and $\fc{k}{\omega}(n) \leq 2k$.
By Corollary \ref{evperiodic}, $\fc{k}{\omega}(n) = 2k$.

Next let $1\leq k\leq +\infty$ and let $\omega \in A^\nats$  be aperiodic  and
\begin{equation*}
  \fc{k}{\omega}(n) = q^{(k)}(n) =
  \begin{cases}
      n + 1 & \text{for $0 \leq n \leq 2k - 1$} \\
      2k & \text{for $n \geq 2k$}
  \end{cases}.
\end{equation*}

\noindent Taking $n=1$ we see that $\omega$ is binary, (say $\omega \in \{0,1\}^\nats).$ We must show that $\omega$ is balanced.
We first recall some basic facts concerning factors of Sturmian words (see for instance \cite{RiZa}):
Let $\eta \in \{0,1\}^\nats$ be a Sturmian word, and let ${\mathcal F}_{\eta}(n)$ denote the factors of $\eta$ of length $n.$
The set ${\mathcal F}_{\eta}(n+1)$ is completely determined from the set ${\mathcal F}_{\eta}(n)$ unless $\eta$ has a bispecial factor $B$ of length $n-1$ in which case both $0B$ and $1B$ are factors of $\eta$ and exactly one of the two is right special.  If $0B$ is right special, then every occurrence of $1B$ in $\eta$ is an occurrence of $1B0.$ If $v$ is a factor of $\eta$ and $u$ a prefix of $v,$ we write $u\vdash v$ if every occurrence of $u$ in $\eta$ is an occurrence of $v.$ Thus if $0B$ is right special, then $1B\vdash 1B0,$ and similarly if $1B$ is right special, then $0B \vdash 0B1.$

Now suppose to the contrary that the aperiodic binary word $\omega$ is not Sturmian. Then there exists a smallest positive integer $n\geq 1$ and a Sturmian word $\eta$ such that ${\mathcal F}_{\omega}(n)= {\mathcal F}_{\eta}(n)$ but
${\mathcal F}_{\omega}(n+1)\neq {\mathcal F}_{\eta'}(n+1)$ for every choice of Sturmian word $\eta'.$
This means that $\omega$ has a bispecial factor $B$ of length $n-1$ and both $0B$ and $1B$ are in
${\mathcal F}_{\omega}(n)$ and one of the following must occur: i) Neither $0B$ nor $1B$ is right special in $\omega;$ ii) There exists a unique $a\in \{0,1\}$ such that $aB$ is right special, and $(1-a)B\vdash (1-a)B(1-a);$  iii) Both $0B$ and $1B$ are right special in $\omega.$ We will show that since $\omega$ is aperiodic, only case iii) is in fact possible. Clearly, if neither $0B$ nor $1B$ were right special, then $\mbox{Card}({\mathcal F}_{\omega}(n))=\mbox{Card}({\mathcal F}_{\omega}(n+1))$ whence $\omega$ is ultimately periodic, a contradiction.
Next suppose case ii) occurs. We may suppose without loss of generality that $0B$ is right special and $1B
\vdash 1B1.$ If $1\vdash 1B$ (and hence $1\vdash 1B1),$ then we would have  $1\vdash 1(B1)^n$ for every $n\geq 1$ from which it follows that  the tail of $\omega$ corresponding to the first occurrence of $1$ on $\omega$ is periodic. Thus if $\neg(1\vdash 1B),$ then there exists a bispecial factor $B'$ of $\omega$  with $0<|B'|<|B|$ such that $1B'$ is right special and $1B'1 \vdash 1B$ and hence $1B'1 \vdash 1B1.$  Writing $1B1=1B'1V$ we have $1B'1 \vdash 1B'1V.$ We next show by induction on $n$ that $1B'1V^n$ is a palindrome for each $n\geq 1.$  Clearly this is true for $n=1$ since $1B'1V=1B1.$
Next suppose $1B'1V^n$ is a palindrome. Then
\[\overline{1B'1V^{n+1}}=\overline{V}^{n+1}\overline{1B'1}=\overline{V}\,\overline{V}^n\overline{1B'1}=\overline{V}1B'1V^n=\overline{V}\,\overline{1B'1}\,V^n=1B'1VV^{n}=1B'1V^{n+1}.\]
Having established that $1B'1V^n$ is a palindrome, it follows that $1B'1$ is a suffix of $1B'1V^n$ and hence $1B'1V^n\vdash 1B'1V^{n+1}$ for each $n\geq 0.$
Whence as before $\omega $ is ultimately periodic.
Thus if $\omega$ is not Sturmian, case iii) must occur. This implies that
\[{\mathcal F}_{\omega}(n+1)= {\mathcal F}_{\eta}(n+1)\cup \{0B0,1B1\}\]
and $\mbox{Card}({\mathcal F}_{\eta}(n+1)\cap \{0B0,1B1\})=1.$ Since $\eta$ is Sturmian,
the number of $k$-Abelian classes of factors of $\eta$ of length $n+1$ is equal to $q^{(k)}(n+1).$
But the additional factor $aBa$ of $\omega$ of length $n+1$ introduces a new $k$-Abelian class since it is not even Abelian equivalent to any other factor of $\eta$ (and hence $\omega)$ of length $n+1.$ Thus $\fc{k}{\omega}(n+1) = q^{(k)}(n+1)+1,$ a contradiction. Thus $\omega$ is Sturmian.

\end{proof}


  \begin{remark} \rm{In view of Corollary~\ref{evperiodic}, within the class of aperiodic words, Sturmian words have the lowest possible $k$-Abelian complexity. See \cite{AFKS,KWZ,KZ, MorHed1940} for other instances in which Sturmian words have the lowest complexity amongst all aperiodic words.}
\end{remark}

\section{Bounded $k$-Abelian complexity \& $k$-Abelian repetitions}\label{last}

There is great interest in avoidability of repetitions in infinite words. This originated with the classical work of Thue \cite{Th06} and \cite{Th12}, in which he established the existence of an infinite binary (resp. ternary) word avoiding cubes (resp. squares).
It was later shown that to avoid Abelian cubes or Abelian squares, one needs $3$-letter or $4$-letter alphabets respectively (see \cite{Dekking1979} and \cite{Keranen1992ICALP}).
The corresponding problems for $k$-abelian repetitions turned out to be quite nontrivial.
It follows easily that the smallest alphabet where $k$-abelian cubes can be avoided is either $2$ or $3,$ and similarly the smallest alphabet where $k$-abelian squares can be avoided is either $3$ or $4.$
In the latter case for $k = 2$ a computer verification revealed that the correct value is $4,$ as in the case of Abelian repetitions:
Each ternary $2$-abelian square-free word is of length at most $536$ \cite{HuKa11}.
In the former case computer verification shows that there exist binary words of length $100000$ which are $2$-abelian cube-free \cite{HuKa11}.
It is still unknown whether there exists an infinite binary word which is $2$-abelian cube-free.
For some larger values of $k$ such infinite words exist.
In the case of binary alphabets and cubes it was shown in a sequence of papers that an infinite word avoiding $k$-abelian cubes can be constructed for $k = 8$, $k = 5$ and for $k = 3$ (see \cite{HuKaSa12ehrenfeucht}, \cite{MeSa12jm} and \cite{MeSa13dlt} respectively).
So only the value $k = 2$ remains open.
It would be extremely surprising if no such infinite words exist.
For avoiding $k$-abelian squares in a ternary alphabet the situation is equally challenging.
We know that for $k = 3$ there exist words of length $100000$ avoiding $3$-abelian squares. The avoidability in infinite words of $k$-abelian squares in a ternary alphabet  is only known for large values of $k$ ($k\geq 64)$ (see \cite{Huova}).

In this section we prove that $k$-Abelian repetitions are unavoidable in words having bounded $k$-Abelian complexity.
For each positive integer $k$ we set
\[A^{\leq k} =\{ x\in A^*\,:\, |x|\leq k\}.\] Given an infinite word $\omega = a_0a_1a_2\ldots \in A^\nats,$ for each
$0\leq i<j <+\infty$ we denote by $\omega[i,j]$ the factor $a_ia_{i+1}\cdots a_j.$

\begin{definition} Let $k$ and $B$ be positive integers and $\omega \in A^\nats.$ We say $\omega$ is $(k,B)$-balanced if and only if for all factors $u$ and $v$ of $\omega$ of equal length, and for all $x \in A^{\leq k}$ we have $\left||u|_x-|v|_x\right| \leq B.$ We say $\omega$ is arbitrarily $k$-imbalanced if $\omega$ is not $(k,B)$-balanced for any positive integer $B.$
\end{definition}

\noindent An elementary, but key observation is that

\begin{lemma}\label{balance} Let $k$ be a positive integer and $\omega \in A^\nats.$ Then $\omega$ has bounded  $k$-Abelian complexity if and only if $\omega$ is $(k,B)$-balanced for some positive integer $B.$
\end{lemma}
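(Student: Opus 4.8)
The plan is to prove both directions by directly relating the two finiteness conditions through the counting functions $f_w(x) = |w|_x$. First I would unpack what ``bounded $k$-Abelian complexity'' means: the function $\fc{k}{\omega}(n)$ is bounded by some constant $C$ for all $n$, which says that among all factors of $\omega$ of length $n$, the number of distinct $k$-Abelian classes never exceeds $C$. By Lemma~\ref{prefixsuffix}, the $k$-Abelian class of a word $w$ of length $n \geq k-1$ is determined by the data $(\pref{k-1}(w),\suff{k-1}(w))$ together with the vector of counts $(|w|_t)_{t \in A^k}$. Since the prefix/suffix data ranges over a fixed finite set (independent of $n$), bounding the number of classes is essentially equivalent to bounding the number of distinct count-vectors that occur.

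For the direction that $(k,B)$-balanced implies bounded complexity, I would fix $n$ and consider the factors of $\omega$ of length $n$. For each $x \in A^{\leq k}$, the $(k,B)$-balance condition forces all the values $|w|_x$, as $w$ ranges over length-$n$ factors, to lie in an interval of length at most $B$; hence each coordinate of the count-vector takes at most $B+1$ values. Since $A^{\leq k}$ is a fixed finite set of size $m + m^2 + \dots + m^k$, the total number of possible count-vectors is at most $(B+1)^{|A^{\leq k}|}$, a bound independent of $n$. Combined with the finitely many choices of $(k-1)$-prefix and suffix, this bounds $\fc{k}{\omega}(n)$ uniformly, so $\omega$ has bounded $k$-Abelian complexity.

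For the converse, I would argue contrapositively: suppose $\omega$ is arbitrarily $k$-imbalanced, so for every $B$ there is some $x \in A^{\leq k}$ and factors $u,v$ of equal length with $\bigl| |u|_x - |v|_x \bigr| > B$. Since $A^{\leq k}$ is finite, some single word $x$ witnesses arbitrarily large imbalances; fixing this $x$, I obtain for each $B$ a length $n_B$ and two factors of that length whose counts of $x$ differ by more than $B$. The key step is to upgrade ``two factors with wildly differing counts of $x$'' into ``many pairwise $k$-inequivalent factors.'' For this I would note that as a factor slides one position to the right within $\omega$, its count of $x$ changes by at most $1$ (an occurrence of $x$ is lost at the left end and at most one is gained at the right end). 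Hence, as the starting position sweeps from the occurrence realizing the small count to the one realizing the large count, the count of $x$ takes every intermediate integer value; selecting one factor of length $n_B$ for each of these more than $B$ distinct values of $|w|_x$ yields more than $B$ factors that are pairwise not even $k$-Abelian equivalent (differing counts of a word of length $\le k$ already obstructs $k$-equivalence). Therefore $\fc{k}{\omega}(n_B) > B$, and since $B$ is arbitrary, the $k$-Abelian complexity is unbounded.

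The main obstacle I anticipate is the converse step of turning a large two-point imbalance into a large collection of pairwise inequivalent factors; the ``slide by one'' observation that the count of a fixed $x$ changes by at most $1$ per unit shift is the crucial ingredient that makes this work, and one must be careful that both extreme factors have the \emph{same} length $n_B$ so that the intermediate-value sweep stays within factors of that common length.
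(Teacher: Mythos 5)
Your proof is correct and follows essentially the same route as the paper: the balanced-to-bounded direction is the same $(B+1)^{|A^{\leq k}|}$ count of possible vectors $(|u|_x)_{x\in A^{\leq k}}$ (your extra prefix/suffix data is harmless but redundant, since that vector already determines the $\kae{k}$-class by definition). For the other direction the paper simply writes ``clearly''; your sliding-window intermediate-value argument is exactly the detail needed to justify that claim, so you have just made the paper's implicit step explicit.
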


\begin{proof} Clearly if  $\mathcal {P}^{(k)}_\omega$ is bounded, say by $B,$ then $\omega$ is $(k,B-1)$-balanced. Conversely, if $\omega$ is $(k,B)$-balanced, then for each positive integer $n$ and for each $x\in A^*$ with $|x|\leq k$ we have
\[\mbox{Card}\{ |u|_x:\, u \in {\mathcal F}_{\omega}(n)\} \leq B+1.\]
It follows that
\[\mathcal {P}^{(k)}_\omega (n) \leq (B+1) ^{K}\]
where $K=\mbox{Card}A^{\leq k}.$

\end{proof}

Fix a positive integer $k.$ It follows from Theorem~\ref{thm:factcompl} and Lemma~\ref{balance} that each Sturmian word is $(k,B)$-balanced for some positive integer $B$ (depending on $k.)$ Actually, I. Fagnot and L. Vuillon proved in \cite{FV} that every Sturmian word is $(k,k)$-balanced.
\begin{definition} Fix $k\in \ints^+ \cup \{+\infty\},$ and $N$ a positive integer. By a $k$-Abelian $N$-power we mean a word $U$ of the form $U=U_1U_2\cdots U_N$ such that $U_i\thicksim_k U_j$ for all $1\leq i,j\leq N.$
\end{definition}

\noindent In this section we shall prove the following result:

\begin{theorem}\label{kabpowers}
Fix $k\in \ints^+ \cup \{+\infty\}.$ Let $\omega =a_0a_1a_2\ldots \in A^\nats$  be an infinite word on a finite alphabet $A$ having bounded $k$-Abelian complexity. Let $D\subseteq \nats$
be a set of  positive upper density, that is
\[
\limsup_{n\rightarrow \infty} \frac{\mbox{Card}\left(D \cap \{1,2, \ldots, n\} \right) }{n} >0.
\]
Then, for every positive integer $N$, there exist $i$ and $\ell$ such that $\{i, i+\ell, i+2\ell, \ldots, i+\ell N\}\subset D$  and the $N$ consecutive blocks  $(\omega[i+j\ell, i+(j+1)\ell-1])_{0\leq j\leq N-1}$ of length $\ell $ are pairwise $k$-Abelian equivalent.  In particular, $\omega$ contains arbitrarily high $k$-Abelian powers.

\end{theorem}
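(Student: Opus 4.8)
The plan is to reduce the statement to an application of Szemer\'edi's theorem. Since $\omega$ has bounded $k$-Abelian complexity, Lemma~\ref{balance} tells us that $\omega$ is $(k,B)$-balanced for some positive integer $B$. The key idea is to attach to each starting position a finite amount of ``local data'' describing the $k$-Abelian class of a long enough prefix-block, in such a way that positions sharing this data yield $k$-Abelian equivalent blocks. Concretely, first I would fix a scale and, to each position $i \in D$, associate the $k$-Abelian equivalence class of a suitable factor of $\omega$ starting at $i$; by Proposition~\ref{simplificationprop} (or rather Lemma~\ref{prefixsuffix}), this class is determined by the prefix and suffix of length $k-1$ together with the vector of counts $(|{\cdot}|_x)_{x \in A^{\leq k}}$.

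The heart of the argument is to control the count vectors. Because $\omega$ is $(k,B)$-balanced, for factors of a \emph{fixed} common length $\ell$ the count $|u|_x$ takes at most $B+1$ distinct values for each $x \in A^{\leq k}$. Thus the number of possible $k$-Abelian classes among length-$\ell$ factors is at most $(B+1)^{K}$ with $K = \mathrm{Card}\,A^{\leq k}$, a bound \emph{independent of $\ell$}. Next I would color each position $i \in D$ by the $k$-Abelian class of the block $\omega[i, i+\ell-1]$. For a suitable $\ell$ this gives a finite coloring of a set of positive upper density. Since $D$ has positive upper density and the number of colors is bounded, some single color class $D'$ still has positive upper density (here I would use that a finite partition of a set of positive upper density has at least one part of positive upper density).

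Now I would invoke Szemer\'edi's theorem~\cite{Sz}: the set $D'$ of positive upper density contains arithmetic progressions of every finite length. Given the target $N$, choose an arithmetic progression $i, i+\ell, i+2\ell, \dots, i+N\ell$ inside $D'$; here the common difference must be arranged to equal the block length $\ell$ used in the coloring. This is the subtle point: Szemer\'edi's theorem gives a progression with \emph{some} common difference, but I need the common difference to match the block length so that consecutive blocks abut perfectly and the arithmetic progression of positions coincides with the boundaries $i+j\ell$. The standard fix is to color positions by block length equal to the progression's common difference \emph{after} extracting the progression, or equivalently to apply Szemer\'edi to obtain a long progression in $D$ first, set $\ell$ to its common difference, and then argue that among the $N+1$ blocks determined by this $\ell$ a positive proportion share a color by pigeonhole on the bounded palette---iterating Szemer\'edi on the monochromatic sub-progression to reach length $N$.

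The main obstacle I anticipate is precisely this coupling between the common difference produced by Szemer\'edi and the block length governing $k$-Abelian equivalence: the coloring by class depends on $\ell$, yet $\ell$ is only revealed as the output of Szemer\'edi's theorem. The cleanest route is a two-step application: first use positive upper density of $D$ to extract, for each fixed candidate difference $\ell$, progressions of length $NK'$ (with $K' = (B+1)^K$) via Szemer\'edi; then apply pigeonhole over the $K'$ possible colors among $NK'+1$ equally spaced positions to locate $N+1$ consecutive (in the progression) positions of the same color, hence $N$ pairwise $k$-Abelian equivalent blocks $\omega[i+j\ell, i+(j+1)\ell-1]$ for $0 \le j \le N-1$, with all endpoints $\{i, i+\ell, \dots, i+N\ell\}$ lying in $D$. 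Once the progression lands in a single class, the blocks are $k$-Abelian equivalent by construction, giving the desired $k$-Abelian $N$-power, and in particular arbitrarily high $k$-Abelian powers as $N \to \infty$.
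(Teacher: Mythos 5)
You have correctly isolated the central difficulty --- the common difference handed to you by Szemer\'edi's theorem must coincide with the block length that defines your coloring, yet the coloring has to be fixed before Szemer\'edi is invoked --- but your proposed resolution does not close the gap. Pigeonhole on $NK'+1$ equally spaced positions with $K'$ colors produces $N+1$ positions of the same color, but not $N+1$ \emph{consecutive} ones (an alternating two-coloring already defeats this for $N=1$ no matter how long the progression is). If you instead extract an arithmetic sub-progression from the monochromatic subset (which requires a finitary form of Szemer\'edi, since the subset is finite), its common difference is some proper multiple $m\ell$ of $\ell$; the blocks you must then compare are $\omega[i+jm\ell,\,i+(j+1)m\ell-1]$ of length $m\ell$, whereas your color only records the $k$-Abelian class of the length-$\ell$ block at each position, and equivalence of the length-$\ell$ prefixes of two blocks says nothing about equivalence of the full length-$m\ell$ blocks. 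So the coupling problem reappears one level up and the iteration does not terminate.

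The paper's proof breaks the coupling by colyoring positions with data that is \emph{cumulative} rather than tied to a block length: each $n$ receives the pair consisting of $\sum_{x\in A^{\leq k}}|\omega[1,n]|_x\alpha_x \bmod M$ and the window $\omega[n-k+1,n+k]$, where the weights $\alpha_x$ and the modulus $M$ come from Lemma~5.4 of \cite{RSZ2}, chosen so that any congruence $\sum_x c_x\alpha_x\equiv 0\pmod M$ with $|c_x|\leq B$ forces all $c_x=0$. This coloring is independent of any block length, so a monochromatic progression of \emph{whatever} common difference $t$ Szemer\'edi returns automatically yields blocks of length $t$: telescoping the first coordinate shows the count vectors of any two such blocks differ by quantities that are simultaneously $\equiv 0\pmod M$ and bounded by $B$ (by $(k,B)$-balancedness), hence equal, while the second coordinate controls the occurrences of $x$ straddling the block boundaries. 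Without this cumulative-count device (or an equivalent one) your argument does not go through. A minor additional point: the case $k=+\infty$ needs separate treatment, since $A^{\leq k}$ is then infinite and the $(k,B)$-balance bookkeeping breaks down; the paper handles it by reducing to ultimate periodicity via Morse--Hedlund and applying Szemer\'edi along a fixed residue class modulo the period.
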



\begin{remark}\rm{The result in Theorem~\ref{kabpowers} is already known in the special case of $D=\nats$ and $k=+\infty$ and  $k=1$ (see \cite{MorHed1940} and \cite{RSZ2} respectively).}
\end{remark}

\noindent Before proving Theorem~\ref{kabpowers} we give some immediate consequences:

\begin{corollary} Let $k$ and $N$ be positive integers, and $\omega $ an infinite word avoiding $k$-Abelian $N$-powers. Then $\omega$ is arbitrarily $k$-imbalanced.
\end{corollary}

\begin{proof} This follows immediately from Lemma~\ref{balance} and Theorem~\ref{kabpowers}.
\end{proof}

\begin{corollary}\label{sturmpowers} Let $\omega$ be a Sturmian word. Then $\omega$ contains  $k$-Abelian $N$-powers for all positive integers $k$ and $N.$
\end{corollary}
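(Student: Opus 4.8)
The plan is to derive Corollary~\ref{sturmpowers} as an immediate application of Theorem~\ref{kabpowers}, so the only real work is to verify that the hypotheses of that theorem are satisfied for any Sturmian word $\omega$. First I would recall that the statement we must prove asserts the existence of $k$-Abelian $N$-powers in $\omega$ for \emph{all} positive integers $k$ and $N$; the case $k=+\infty$ corresponds to ordinary $N$-powers and follows already from the $D=\nats$, $k=+\infty$ instance, so it suffices to treat finite $k$.

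The key step is to check that a Sturmian word has bounded $k$-Abelian complexity. By Theorem~\ref{thm:factcompl}, any Sturmian word $\omega$ satisfies $\fc{k}{\omega}(n)=q^{(k)}(n)$ for every $n$, and since $q^{(k)}(n)\leq 2k$ for all $n$, the function $\fc{k}{\omega}$ is bounded (by $2k$). Thus $\omega$ has bounded $k$-Abelian complexity for each fixed finite $k$. This is precisely the hypothesis required by Theorem~\ref{kabpowers}. Alternatively one could invoke Lemma~\ref{balance} together with the Fagnot--Vuillon result that every Sturmian word is $(k,k)$-balanced, but appealing directly to Theorem~\ref{thm:factcompl} is cleaner.

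With bounded $k$-Abelian complexity established, I would simply apply Theorem~\ref{kabpowers} with the choice $D=\nats$, which trivially has positive upper density (indeed density $1$). The theorem then yields, for every positive integer $N$, indices $i$ and $\ell$ so that the $N$ consecutive blocks $(\omega[i+j\ell,\,i+(j+1)\ell-1])_{0\leq j\leq N-1}$ are pairwise $k$-Abelian equivalent; this concatenation is exactly a $k$-Abelian $N$-power occurring in $\omega$. Since $k$ and $N$ were arbitrary (finite), and the case $k=+\infty$ is covered as noted above, the conclusion follows.

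I do not anticipate any genuine obstacle here: the corollary is designed to be a one-line consequence of the main theorem of the section combined with the complexity computation of Theorem~\ref{thm:factcompl}. The only point requiring a moment's care is to confirm that boundedness of $\fc{k}{\omega}$ (rather than of $\mathcal{P}^{(k)}_\omega$ directly) is what feeds into Theorem~\ref{kabpowers}; but since $q^{(k)}$ is itself bounded and equals the $k$-Abelian complexity of Sturmian words by Theorem~\ref{thm:factcompl}, there is nothing subtle to overcome.
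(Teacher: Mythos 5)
Your proposal is correct and follows exactly the paper's own argument: Theorem~\ref{thm:factcompl} gives $\mathcal{P}^{(k)}_\omega(n)=q^{(k)}(n)\leq 2k$, so the $k$-Abelian complexity is bounded, and Theorem~\ref{kabpowers} (with $D=\nats$) then yields the $k$-Abelian $N$-powers. The one point you flag as needing care is a non-issue, since $\fc{k}{\omega}$ is by definition the same function as $\mathcal{P}^{(k)}_\omega$.
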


\begin{proof} This follows immediately from Theorems~\ref{thm:factcompl} and \ref{kabpowers}; in fact  the $k$-Abelian complexity $\mathcal {P}^{(k)}_\omega$ is bounded (by $2k$) for each positive integer $k.$
\end{proof}

\begin{remark}\rm{It is known that a Sturmian word $\omega$ contains an $N$-power for each positive integer $N$ if and only if the sequence of partial quotients in the continued fraction expansion of the slope of $\omega$ is unbounded. So, a Sturmian word whose corresponding slope has bounded partial quotients (e.g., the Fibonacci word) will not contain $N$-powers for $N$ sufficiently large (e.g.,  the Fibonacci word contains no $4$-powers \cite{Kar,MiPi}). However, every Sturmian word will contain arbitrarily high $k$-Abelian powers. }
\end{remark}

Our proof of Theorem~\ref{kabpowers} will make use of the following well known result
first conjectured by Erd\"os and Turan and later proved by to E. Szemer\'edi:

\begin{theorem}\label{vdw}{\rm[Szemer\'edi's theorem \cite{Sz}]}
Let $D\subseteq \nats$ be a set of positive upper density. Then $D$ contains arbitrarily long arithmetic progressions.
\end{theorem}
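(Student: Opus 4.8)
The plan is to follow Furstenberg's ergodic-theoretic route, which recasts this purely combinatorial statement as a multiple-recurrence phenomenon in dynamics. The first step is the \emph{Furstenberg correspondence principle}. Write $\delta = \limsup_{n} |D \cap [1,n]|/n > 0$, and on the shift space $X = \{0,1\}^{\ints}$ with the left shift $T$, consider the point $x = \mathbf{1}_D$ (extended arbitrarily to negative indices) together with the cylinder $A = \{ y \in X : y_0 = 1 \}$, so that $T^i x \in A$ exactly when $i \in D$. Choosing lengths $N_j \to \infty$ that realize the $\limsup$, one forms the Ces\`aro averages of the orbit point masses at $x, Tx, T^2 x, \dots$ and passes to a weak-$*$ limit $\mu$, a $T$-invariant Borel probability measure with $\mu(A) \geq \delta > 0$. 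The key observation is that an arithmetic progression $\{i, i+\ell, \dots, i+k\ell\} \subseteq D$ corresponds to a return $T^i x \in A \cap T^{-\ell}A \cap \cdots \cap T^{-k\ell}A$, so it suffices to produce, for this system and this $A$, some $\ell \geq 1$ with
\[
  \mu\!\left( A \cap T^{-\ell}A \cap T^{-2\ell}A \cap \cdots \cap T^{-k\ell}A \right) > 0.
\]

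The central step is then the \emph{Furstenberg multiple recurrence theorem}: for any measure-preserving system $(X, \mathcal{B}, \mu, T)$, any $A$ with $\mu(A) > 0$, and any $k \geq 1$,
\[
  \liminf_{N \to \infty} \frac{1}{N} \sum_{\ell=1}^{N} \mu\!\left( A \cap T^{-\ell}A \cap \cdots \cap T^{-k\ell}A \right) > 0,
\]
which forces infinitely many admissible $\ell$ and hence arbitrarily long progressions in $D$. I would prove this by induction on $k$ using the \emph{characteristic factor} machinery. After reducing to an ergodic system, one studies the multiple ergodic averages $\frac{1}{N}\sum_{\ell} f_1(T^\ell x) \cdots f_k(T^{k\ell} x)$; their $L^2$-limiting behaviour is governed by a canonical factor that is an inverse limit of $(k-1)$-step nilsystems. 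On the relatively mixing part the averages decouple toward the product of integrals, while on the compact (structured) part positivity of recurrence is established by an explicit argument for rotations and their nilpotent generalizations.

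The scaffolding is Furstenberg's structure theorem: every ergodic system sits atop a transfinite tower of extensions, each either \emph{compact} (isometric) or \emph{relatively weakly mixing} over the previous factor. The base case $k = 1$ is Poincar\'e recurrence; the inductive step rests on two lifting lemmas asserting that (i) relatively weakly mixing extensions do not destroy the multiple recurrence already present in the factor, and (ii) compact extensions propagate positivity upward via a uniform fibrewise recurrence estimate. Combining the base case with these two lemmas across the tower yields the $\liminf$ positivity for every $k$, and through the correspondence principle this delivers the theorem.

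The hard part is unquestionably the structure theorem together with the two lifting lemmas: showing that compact extensions preserve multiple recurrence demands delicate control of the fibre dynamics (via an averaging argument of van der Waerden type carried out in the fibres), while relatively weakly mixing extensions require a relative generalized von Neumann inequality guaranteeing that the relevant averages are dictated by the factor. An alternative I would keep in reserve is Gowers' quantitative proof: define the uniformity norms $\|f\|_{U^{k}}$, establish a generalized von Neumann theorem bounding the progression count by such a norm, invoke the inverse theorem identifying large uniformity norm with correlation to a nilsequence, and run a density-increment argument that passes to a long subprogression on which the density of $D$ strictly increases, iterating until pseudorandomness forces the expected count of progressions. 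Either route concentrates essentially all of the difficulty in the same structure-versus-randomness dichotomy.
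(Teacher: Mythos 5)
The paper does not prove this statement at all: it is Szemer\'edi's theorem, quoted from the literature (the citation is to Szemer\'edi's 1975 paper) and used as a black box in the proof of Theorem~\ref{kabpowers}. So there is no in-paper argument to compare yours against; what you have attempted, blind, is a proof of one of the deepest results in combinatorics.

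Judged on its own terms, your outline is an accurate map of Furstenberg's ergodic-theoretic proof, which is a genuinely different route from Szemer\'edi's original combinatorial argument (an intricate density-increment analysis, the ancestor of the regularity lemma) that the paper cites. Your correspondence principle is set up correctly: the weak-$*$ limit of Ces\`aro averages of orbit measures gives a $T$-invariant $\mu$ with $\mu(A)\geq\delta$, and since $A\cap T^{-\ell}A\cap\cdots\cap T^{-k\ell}A$ is clopen in the shift space, positive $\mu$-measure does produce an actual progression inside $D$; the reduction to multiple recurrence is the right one. But what you have written is a plan, not a proof. Every genuinely hard step --- the Furstenberg structure theorem, the two lifting lemmas (that compact extensions and relatively weakly mixing extensions each preserve multiple recurrence), and, in your fallback Gowers route, the generalized von Neumann theorem and the inverse theorem for the $U^k$ norms --- is named and its role described, but none is proved, and these steps constitute essentially the entire content of the theorem. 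As a submission it is a correct annotated table of contents for a proof rather than a proof. For the purposes of this paper, the right move is the one the authors made: cite the theorem and spend the effort on Theorem~\ref{kabpowers}, where the paper's actual contribution lies.
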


\begin{proof}[Proof of Theorem~\ref{kabpowers}] Let $D\subseteq \nats$ be a set of positive upper density. First we consider the case $k=+\infty.$ By assumption
$ \mathcal {P}^{(+\infty)}_\omega (n)$ is bounded. This is equivalent to saying that $\omega$ has bounded factor complexity. It follows by Morse-Hedlund that $\omega$ is ultimately periodic, i.e., $\omega =UV^\infty$ for some $U,V\in A^*.$  For each $i\geq 0,$ set $D_i= D\cap \{i+j|V|\,:\, j=1,2,3,\ldots \}.$ Pick $i>|U|$ such that the set $D_i$ has positive upper density.  Then an arithmetic progression of length $N + 1$ in $D_i$ (guaranteed by Szemer\'edi's theorem) determines the $N$th power of some cyclic conjugate of $V.$

Next let us fix positive integers $k$ and $N$ and assume that $ \mathcal {P}^{(k)}_\omega (n)$ is bounded.  It follows by Lemma~\ref{balance} that $\omega$ is $(k,B)$-balanced for some positive integer $B.$  We recall the following lemma proved in \cite{RSZ2}

\begin{lemma}\label{LA}{\rm[Lemma~5.4 in \cite{RSZ2}]} Let $k$ and $B$ be positive integers. There exist positive integers $\alpha _x$ for each $x\in A^{\leq k}$ and a positive integer $M$ such that whenever
\[\sum _{x\in A^{\leq k}}c_x\alpha _x \equiv 0 \pmod {M}\]
for integers $c_x$ with $|c_x|\leq B$ for each $x\in A^{\leq k},$ then
$c_x=0$ for each $x\in A^{\leq k}.$
\end{lemma}

Set
\[\mathcal{D}= (D-1)\cap \{k,k+1,k+2\ldots\}.\]
Then $\mathcal{D}$ is of positive upper density.
We now define a finite coloring
\[\Phi: \mathcal{D} \longrightarrow \{0,1,2,\ldots , M-1\}\times {\mathcal F}_{\omega}(2k) \]
\noindent as follows
\[\Phi (n) \doteqdot \left(\sum_{x\in A^{\leq k}}|\omega[1,n]|_x\alpha_x \,(\bmod {M})\,; \omega[n-k+1,n+k]\right)\]

\noindent where $\alpha_x$ and $M$ are as in Lemma~\ref{LA}. Note that the second coordinate of $\Phi(n)$ is the suffix of length $2k$ of $\omega[1,n+k].$ We note also that if $\Phi(m)=\Phi(n)$ for some $m<n,$ then by considering the first coordinate of $\Phi$ one has

\begin{eqnarray}
\sum_{x\in A^{\leq k}}|\omega[1,n]|_x\alpha_x - \sum_{x\in A^{\leq k}}|\omega[1,m]|_x\alpha_x \equiv 0 \pmod{M}
\end{eqnarray}

\begin{eqnarray}
\sum_{x\in A^{\leq k}}\left(|\omega[1,n]|_x - |\omega[1,m]|_x\right)\alpha_x \equiv 0 \pmod{M}
\end{eqnarray}

\begin{eqnarray}\label{used}
\sum_{x\in A^{\leq k}} |\omega[m-|x|+2, n]|_x\alpha_x \equiv 0 \pmod{M}.
\end{eqnarray}

$\Phi$ defines a finite partition of $\mathcal{D}$ where two elements $r$ and $s$ in $\mathcal{D}$ belong to the same class of the partition if and only if $\Phi(r)=\Phi(s).$ Clearly at least one class of this partition of $\mathcal{D}$ has positive upper density. Thus by Szemer\'edi's theorem, there exist positive integers $r$ and $t$ with $r\geq k$ such that
\[\{r,r+t,r+2t,\ldots,r+Nt\}\subset \mathcal{D}\] and
\[\Phi(r)=\Phi(r+t)=\Phi(r+2t)=\cdots =\Phi(r+Nt).\]

We now claim that the $N$ consecutive blocks of length $t$
\[\omega[r+1,r+t]\omega[r+t+1,r+2t]\omega[r+2t+1,r+3t]\ldots \omega[r+(N-1)t+1, r+Nt]\]
are pairwise $k$-Abelian equivalent. This would prove that $\omega$ contains a $k$-Abelian $N$-power in position $r+1\in D.$

To prove the claim, let $0\leq i,j \leq N-1.$ We will show that
\[\omega[r+it+1,r+(i+1)t]\thicksim_k \omega[r+jt+1, r+(j+1)t].\]

\noindent By (\ref{used}) first taking $n=r+(i+1)t$ and $m=r+it,$ then $n=r+(j+1)t$ and $m=r+jt$
\[\sum_{x\in A^{\leq k}}|\omega[r+it-|x|+2,r + (i+1)t]|_x\alpha_x \equiv  \sum_{x\in A^{\leq k}}|\omega[r+jt-|x|+2,r + (j+1)t]|_x\alpha_x \equiv 0 \pmod{M}\]

\noindent and hence
\[\sum_{x\in A^{\leq k}}\left(|\omega[r+it-|x|+2,r + (i+1)t]|_x-|\omega[r+jt-|x|+2,r + (j+1)t]|_x\right)\alpha_x \equiv 0 \pmod{M}.\]

\noindent But since
\[|\omega[r+it-|x|+2,r + (i+1)t]|= |\omega[r+jt-|x|+2,r + (j+1)t]|=|x|+t-1\]
and $\omega$ is $(k,B)$-balanced, it follows that
\[\lvert|\omega[r+it-|x|+2,r + (i+1)t]|_x-|\omega[r+jt-|x|+2,r + (j+1)t]|_x\rvert\leq B\]
\noindent whence by Lemma~\ref{LA} we deduce that for each $x\in A^{\leq k}$
\begin{eqnarray}\label{phew}|\omega[r+it-|x|+2,r + (i+1)t]|_x=|\omega[r+jt-|x|+2,r + (j+1)t]|_x.\end{eqnarray}

\noindent Since $\Phi(r+it)=\Phi(r+jt),$ the second coordinate of $\Phi$ gives
\[\omega[r+it -k+1,r+it+k]=\omega[r+jt -k +1, r+jt+k].\]
Together with (\ref{phew}) we deduce that  for each $x\in A^{\leq k}.$
\[ |\omega[r+it+1,r + (i+1)t]|_x=|\omega[r+jt+1,r + (j+1)t]|_x.\]

\noindent In other words
\[\omega[r+it+1,r + (i+1)t]\thicksim_k \omega[r+jt+1,r + (j+1)t]\]
as required. This completes our proof of Theorem~\ref{kabpowers}

\end{proof}


\begin{thebibliography}{50}

\bibitem{AFKS}
S.V.~Avgustinovich, A.~Frid, T.~Kamae, P.~Salimov,
\newblock Infinite permutations of lowest maximal pattern complexity,
\newblock {\em Theoret. Comput. Sci.,} 412 (2011) 2911--2921.

\bibitem{BuLuZa12}
M.~Bucci, A.~De Luca, L.Q.~Zamboni,
\newblock Some characterizations of Sturmian words in terms of the lexicographic order,
\newblock {\em Fund. Inform.}, 116 (2012)  25--33.

\bibitem{CRSZ}
J.~Cassaigne, G.~Richomme, K.~Saari, L.Q. Zamboni,
\newblock Avoiding Abelian powers in binary words with bounded Abelian complexity,
\newblock {\em Internat. J. Found. Comput. Sci.}, 22 (2011) 905--920.

\bibitem{CovHed}
E.~M. Coven, G.~A. Hedlund,
\newblock Sequences with minimal block growth,
\newblock {\em Math. Systems Theory}, 7 (1973)  138--153.

\bibitem{CurRam2009}
J.~Currie, N.~Rampersad.
\newblock Recurrent words with constant Abelian complexity,
\newblock {\it Adv. in Appl. Math.,} 47 (2011) 116--124.

\bibitem{Dekking1979}
F.M. Dekking,
\newblock Strongly non-repetitive sequences and progression-free sets,
\newblock {\em J. Combin. Theory Ser. A,} 27 (1979) 181--185.

\bibitem{DJP} X. Droubay,  J. Justin, G. Pirillo,
\newblock  Episturmian words and some constructions of de Luca and Rauzy, 
\newblock {\it Theoret. Comput. Sci.,} 255 (2001) 539--553.

\bibitem{ER} A. Ehrenfeucht, G. Rozenberg,
\newblock Elementary homomorphisms and a solution of the D0L sequence equivalence problem,
\newblock {\it Theoret. Comput. Sci.,} 7 (1978) 169--183.



\bibitem{FV}
I.~Fagnot, L.~Vuillon,
\newblock Generalized balances in Sturmian words,
\newblock {\it Discrete Appl. Math.,} 121 (2002)  83--101.

\bibitem{FDFF}
D.G.~Fon-Der-Flaass, A.~Frid,
\newblock On periodicity and low complexity of infinite permutations,
\newblock {\it European J. Combin.,} 28 (2007) 2106--2114.

\bibitem{Huova}
M. Huova,
\newblock Existence of infinite ternary $k$-Abelian square free words,
\newblock Preprint, 2013.

\bibitem{HuKa11}
M. Huova, J. Karhum{\"a}ki.
\newblock Observations and problems on $k$-abelian avoidability,
\newblock In {\em Combinatorial and Algorithmic Aspects of Sequence Processing
 (Dagstuhl Seminar 11081)}, (2011) 2215--2219.

\bibitem{HuKaSa12ehrenfeucht}
M. Huova, J. Karhum{\"a}ki,  A. Saarela,
\newblock Problems in between words and abelian words: $k$-abelian
 avoidability,
\newblock {\em Theoret. Comput. Sci.} 454 (2012) 172--177.

\bibitem{KWZ}
T.~Kamae, S.~Widmer, L.Q.~Zamboni,
\newblock Maximal pattern Abelian complexity,
\newblock Preprint, 2013.

\bibitem{KZ}
T.~Kamae, L.Q.~Zamboni,
\newblock Sequence entropy and the maximal pattern complexity of
infinite words,
\newblock {\it Ergodic Theory Dynam. Systems,} 22 (2002) 1191--1199.

\bibitem{Ka80}
J. Karhum{\"a}ki,
\newblock Generalized Parikh mappings and homomorphisms,
\newblock {\em Information and Control,} 47 (1980) 155--165.

\bibitem{Kar}
J. Karhum{\"a}ki,
\newblock On cube free $\omega$-words generated by binary morphisms,
\newblock {\it Discrete Appl. Math.,} 5 (1983)  279--297.


\bibitem{Keranen1992ICALP}
V.~Ker\"anen.
\newblock Abelian squares are avoidable on $4$ letters.
\newblock In W.~Kuich, editor, {\em Proceedings of ICALP'1992 (International
Conference on Automata, Languages and Programming - Vienna 1992)}, volume 623
of {\em Lecture Notes in Comput. Sci.}, pages 41--52. Springer, Berlin, 1992.

\bibitem{Lothaire1983book}
M.~Lothaire,
\newblock {\em Combinatorics on {W}ords}, volume~17 of {\em Encyclopedia of
Mathematics and its Applications}.
\newblock Addison-Wesley, 1983.
\newblock Reprinted in the {\em Cambridge Mathematical Library}, Cambridge
University Press, UK, 1997.


\bibitem{MeSa12jm}
R. Merca{\c s}, A. Saarela,
\newblock $5$-abelian cubes are avoidable on binary alphabets,
\newblock In {\em Proceedings of the 14th Mons Days of Theoretical Computer
 Science}, 2012.

\bibitem{MeSa13dlt}
R. Merca{\c s}, A. Saarela,
\newblock $3$-abelian cubes are avoidable on binary alphabets, Preprint 2013.

\bibitem{MiPi}
F. Mignosi, G. Pirillo,
\newblock Repetitions in the Fibonacci infinite word,
\newblock {\it RAIRO  Theor. Inform. Appl.,} 26 (1992) 199-204.

\bibitem{MorHed1940}
M.~Morse, G.A. Hedlund,
\newblock Symbolic {D}ynamics {II}: {S}turmian trajectories,
\newblock {\em Amer. J. Math.,} 62 (1940)  1--42.

\bibitem{Post}
E. Post,
\newblock A variant of a recursively unsolvable problem,
\newblock {\it Bull. Amer. Math. Soc.,} 52 (1946) 264--268.

\bibitem{PZ}
S. Puzynina, L.Q. Zamboni,
\newblock Abelian returns in Sturmian words, 
\newblock {\it J. Combin. Theory Ser. A,} 120 (2013)  390-408.




\bibitem{RSZ1}
G.~Richomme, K.~Saari, L.Q. Zamboni,
\newblock Balance and Abelian complexity of the {T}ribonacci word,
\newblock {\em Adv. Appl. Math.,}, 45 (2010)  212--231.


\bibitem{RSZ2}
G.~Richomme, K.~Saari,  L.Q. Zamboni,
\newblock Abelian complexity of minimal subshifts,
\newblock {\em J. London Math. Soc. (2),} 83 (2011) 79--95.

\bibitem {RiZa}
R.~ Risley, L.Q. Zamboni.
\newblock A generalization of Sturmian sequences; combinatorial
structure and transcendence,
\newblock {\em Acta Arith.,} XCV.2 (2000) 167--184.

\bibitem{aleksi}
A.~Saarela,
\newblock Ultimately constant abelian complexity of infinite words,
\newblock {\em J. Autom. Lang. Comb.,} 14 (2009)  255--258.

\bibitem{Sz}
E.~Szemer\'edi,
\newblock On sets of elements containing no $k$ elements in arithmetic progressions,
\newblock {\em Acta Arith.,} 27 (1975) 299--345.

\bibitem{Th06}
A. Thue,
\newblock {\"U}ber unendliche zeichenreihen,
\newblock {\em Norske Vid. Selsk. Skr. I. Mat. Nat. Kl.,} 7 (1906) 1--22.

\bibitem{Th12}
A. Thue,
\newblock {\"U}ber die gegenseitige lage gleicher teile gewisser
 zeichen-reihen,
\newblock {\em Norske Vid. Selsk. Skr. I. Mat. Nat. Kl.,} 1 (1912) 1--67.


\end{thebibliography}
\end{document}